\documentclass[12pt]{amsart}
\usepackage{amsmath,amssymb,amsbsy,amsfonts,latexsym,amsopn,amstext,cite,
                                               amsxtra,euscript,amscd,bm,mathabx}
\usepackage{url}
\usepackage[colorlinks,linkcolor=blue,anchorcolor=blue,citecolor=blue,backref=page]{hyperref}
\usepackage{color}
\usepackage{graphics,epsfig}
\usepackage{graphicx}
\usepackage{float} 
\usepackage[english]{babel}
\usepackage{mathtools}
\usepackage{todonotes}
\usepackage{url}
\usepackage[colorlinks,linkcolor=blue,anchorcolor=blue,citecolor=blue,backref=page]{hyperref}
\DeclareMathAlphabet{\mathmybb}{U}{bbold}{m}{n}

\usepackage[norefs,nocites]{refcheck}
\hypersetup{breaklinks=true}

\usepackage[norefs,nocites]{refcheck}
\usepackage[english]{babel}
\begin{document}

\newtheorem{thm}{Theorem}
\newtheorem{lem}[thm]{Lemma}
\newtheorem{claim}[thm]{Claim}
\newtheorem{cor}[thm]{Corollary}
\newtheorem{prop}[thm]{Proposition} 
\newtheorem{definition}[thm]{Definition}
\newtheorem{rem}[thm]{Remark} 
\newtheorem{question}[thm]{Open Question}
\newtheorem{conj}[thm]{Conjecture}
\newtheorem{prob}{Problem}
\newtheorem{Process}[thm]{Process}
\newtheorem{Computation}[thm]{Computation}
\newtheorem{Fact}[thm]{Fact}
\newtheorem{Observation}[thm]{Observation}

\newtheorem{lemma}[thm]{Lemma}

\newcommand{\GL}{\operatorname{GL}}
\newcommand{\SL}{\operatorname{SL}}
\newcommand{\lcm}{\operatorname{lcm}}
\newcommand{\ord}{\operatorname{ord}}
\newcommand{\Op}{\operatorname{Op}}
\newcommand{\Tr}{\operatorname{Tr}}
\newcommand{\Nm}{\operatorname{Nm}}

\numberwithin{equation}{section}
\numberwithin{thm}{section}
\numberwithin{table}{section}

\numberwithin{figure}{section}

\def\sssum{\mathop{\sum\!\sum\!\sum}}
\def\ssum{\mathop{\sum\ldots \sum}}
\def\iint{\mathop{\int\ldots \int}}

\def\wt {\mathrm{wt}}
\def\Tr {\mathrm{Tr}}

\def\SrA{\cS_r\(\cA\)}

\def\vol {{\mathrm{vol\,}}}
\def\squareforqed{\hbox{\rlap{$\sqcap$}$\sqcup$}}
\def\qed{\ifmmode\squareforqed\else{\unskip\nobreak\hfil
\penalty50\hskip1em\null\nobreak\hfil\squareforqed
\parfillskip=0pt\finalhyphendemerits=0\endgraf}\fi}

\def \ss{\mathsf{s}} 

\def \balpha{\bm{\alpha}}
\def \bbeta{\bm{\beta}}
\def \bgamma{\bm{\gamma}}
\def \blambda{\bm{\lambda}}
\def \bchi{\bm{\chi}}
\def \bphi{\bm{\varphi}}
\def \bpsi{\bm{\psi}}
\def \bomega{\bm{\omega}}
\def \btheta{\bm{\vartheta}}

\newcommand{\bfxi}{{\boldsymbol{\xi}}}
\newcommand{\bfrho}{{\boldsymbol{\rho}}}

 \def \xbar{\overline x}
  \def \ybar{\overline y}

\def\cA{{\mathcal A}}
\def\cB{{\mathcal B}}
\def\cC{{\mathcal C}}
\def\cD{{\mathcal D}}
\def\cE{{\mathcal E}}
\def\cF{{\mathcal F}}
\def\cG{{\mathcal G}}
\def\cH{{\mathcal H}}
\def\cI{{\mathcal I}}
\def\cJ{{\mathcal J}}
\def\cK{{\mathcal K}}
\def\cL{{\mathcal L}}
\def\cM{{\mathcal M}}
\def\cN{{\mathcal N}}
\def\cO{{\mathcal O}}
\def\cP{{\mathcal P}}
\def\cQ{{\mathcal Q}}
\def\cR{{\mathcal R}}
\def\cS{{\mathcal S}}
\def\cT{{\mathcal T}}
\def\cU{{\mathcal U}}
\def\cV{{\mathcal V}}
\def\cW{{\mathcal W}}
\def\cX{{\mathcal X}}
\def\cY{{\mathcal Y}}
\def\cZ{{\mathcal Z}}
\def\Ker{{\mathrm{Ker}}}

\def\NmQR{N(m;Q,R)}
\def\VmQR{\cV(m;Q,R)}

\def\Xm{\cX_{p,m}}

\def \A {{\mathbb A}}
\def \B {{\mathbb A}}
\def \C {{\mathbb C}}
\def \F {{\mathbb F}}
\def \G {{\mathbb G}}
\def \L {{\mathbb L}}
\def \K {{\mathbb K}}
\def \N {{\mathbb N}}
\def \PP {{\mathbb P}}
\def \Q {{\mathbb Q}}
\def \R {{\mathbb R}}
\def \Z {{\mathbb Z}}
\def \fS{\mathfrak S}
\def \fB{\mathfrak B}

\def\Fq{\F_q}
\def\Fqr{\F_{q^r}} 
\def\ovFq{\overline{\F_q}}
\def\ovFp{\overline{\F_p}}
\def\GL{\operatorname{GL}}
\def\SL{\operatorname{SL}}
\def\PGL{\operatorname{PGL}}
\def\PSL{\operatorname{PSL}}
\def\li{\operatorname{li}}
\def\sym{\operatorname{sym}}

\def\Mob{M{\"o}bius }

\def\fF{\EuScript{F}}
\def\M{\mathsf {M}}
\def\T{\mathsf {T}}

\def\e{{\mathbf{\,e}}}
\def\ep{{\mathbf{\,e}}_p}
\def\eq{{\mathbf{\,e}}_q}

\def\\{\cr}
\def\({\left(}
\def\){\right)}

\def\<{\left(\!\!\left(}
\def\>{\right)\!\!\right)}
\def\fl#1{\left\lfloor#1\right\rfloor}
\def\rf#1{\left\lceil#1\right\rceil}

\def\Tr{{\mathrm{Tr}}}
\def\Nm{{\mathrm{Nm}}}
\def\Im{{\mathrm{Im}}}

\def \oF {\overline \F}

\newcommand{\pfrac}[2]{{\left(\frac{#1}{#2}\right)}}

\def \Prob{{\mathrm {}}}
\def\e{\mathbf{e}}
\def\ep{{\mathbf{\,e}}_p}
\def\epp{{\mathbf{\,e}}_{p^2}}
\def\em{{\mathbf{\,e}}_m}

\def\Res{\mathrm{Res}}
\def\Orb{\mathrm{Orb}}

\def\vec#1{\mathbf{#1}}
\def \va{\vec{a}}
\def \vb{\vec{b}}
\def \vh{\vec{h}}
\def \vk{\vec{k}}
\def \vs{\vec{s}}
\def \vu{\vec{u}}
\def \vv{\vec{v}}
\def \vz{\vec{z}}
\def\flp#1{{\left\langle#1\right\rangle}_p}
\def\T {\mathsf {T}}

\def\sfG {\mathsf {G}}
\def\sfK {\mathsf {K}}

\def\mand{\qquad\mbox{and}\qquad}

\title[Rational approximation with radix constraints]
{Rational Approximation with digit-restricted denominators}

\author[Siddharth Iyer] {Siddharth Iyer}
\address{School of Mathematics and Statistics, University of New South Wales, Sydney, NSW 2052, Australia}
\email{siddharth.iyer@unsw.edu.au}

\begin{abstract} We show the existence of ``good'' approximations to a real number $\gamma$ using rationals with denominators formed by digits $0$ and $1$ in base $b$. We derive an elementary estimate and enhance this result by managing exponential sums.
\end{abstract}

\keywords{Rational approximation, Restricted digits, Small fractional parts}
\subjclass[2010]{11A63,11J99}

\maketitle

\tableofcontents

\section{Introduction}

Given a set $H \subseteq \mathbb{N}$ of positive natural numbers, we are interested in whether we can obtain that for any $\gamma \in \R$ and $N\in \N$, one has
\begin{align*}
\min_{1 \leq n \leq N,\ n \in H} \|n \gamma\| \leq f(N),
\end{align*}
where $\|.\|$ is the distance to the nearest integer function and $f$ is some function decaying to zero. We say that the set $H$ is a $\mathcal{H}$-set if the above property is satisfied. Literature has discussed a weaker notion of \textit{Heilbronn} sets \cite[p. 35]{Montgomery}. A set $H \subseteq \N$ is said to be Heilbronn if for all $\varepsilon > 0$ and $\gamma \in \R$ there exists positive $h_{\gamma}(\varepsilon) \in H$ so that $\|h_{\gamma}(\varepsilon) \gamma\| < \varepsilon$.

A simple example of a Heilbronn set is $\mathbb{N}$, since for any $\gamma \in \R$ we can exploit the Dirichlet approximation Theorem to write
\begin{equation*}
\min_{1 \leq n \leq N} \|\gamma n\| \leq \frac{1}{N},
\end{equation*}
saying that it is a $\mathcal{H}-$set would be a stronger assertion.
If a set $H$ is discovered to be a $\mathcal{H}-$set along with a suitable decay function $f$ then for any $N \in \N$ and $\gamma \in \R,$ there exist $a$ and $b$ in the set $\{1,\ldots,N\}\cap H$ so that
\begin{align*}
\left|\gamma - \frac{a}{b}\right| \leq \frac{1}{bf(N)}.
\end{align*}
 With this interpretation of a $\mathcal{H}-$set, we discover how well rationals with denominators in $H$ approximate real numbers, ``how well'' being dependent on the quality of the decay function $f$.

One need not be content after possibly discovering that a certain set $H$ is a $\mathcal{H}-$set; discovering a stronger decay function or showing that the function is optimal (with notions of optimality) can be one avenue for the next research stage. In the case of square integers, Heilbronn  demonstrated that for any $\gamma \in \R$, one has 
\begin{equation}
\label{Squareref}
\min_{1 \leq n \leq N}\|\gamma n^2\| \leq C_{\theta} N^{-\theta},
\end{equation}
for all $\theta < \frac{1}{2}$, where $C_{\theta}$ is some real number depending only on $\theta$ \cite{Heilbronn}. Later, this proposition was improved to $\theta < \frac{4}{7}$ by Zaharescu \cite{Zah}.

Expanding beyond rational approximations with square or unrestricted denominators, if $Q$ is a polynomial with integer coefficients, positive for natural numbers, with $Q(0) = 0$, then the set of values obtained by applying $Q$ to natural numbers, denoted as $Q(\N)$, forms an $\mathcal{H}$-set \cite{Baker}. To add to this growing collection of literature, if $b \in \{2,3,\ldots\}$, we prove that the set 
\begin{align*}
\mathfrak{D}_{b} := \left\{\sum_{d=0}^{\infty}a_{d}b^{d}: a_{d}\in \left\{0,1\right\}, \ 0<\sum_{d=0}^{\infty}a_{d}<\infty\right\},
\end{align*}
 consisting of positive integers made of digits $0$ and $1$ in base $b$ representation, is a $\mathcal{H}-$set.  

Through elementary means, see Section~\ref{sec:elem}, it is possible to prove that for any $\gamma \in \R$ we have
\begin{align}
\label{eq:ElemBound}
\min_{1 \leq n \leq N, \ n \in \mathfrak{D}_{b}} \|\gamma n\| \leq   \frac{A_{b}}{\log N},
\end{align}
for some effective real constant $A_{b}$, depending only on $b$. Hence, we use~\eqref{eq:ElemBound} as a benchmark for further improvements. We enhance the decay function with the help of exponential sums as follows.
\begin{thm}
\label{main1}
For any $\gamma \in \R$, we have
\begin{align*}
\min_{1 \leq n \leq N, \ n \in \mathfrak{D}_{b}} \|\gamma n\| \leq \frac{C_{b}}{(\log N)^2},
\end{align*}
for some effective real constant $C_{b}$, depending only on $b$.
\end{thm}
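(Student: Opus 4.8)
The plan is to show that the set $\mathfrak{D}_b$ is "dense enough" in a suitable sense that one can run a pigeonhole/counting argument on the fractional parts $\{\gamma n\}$ for $n \in \mathfrak{D}_b \cap [1,N]$, with the improvement over the elementary bound \eqref{eq:ElemBound} coming from an estimate on the exponential sum
\[
S(\gamma) = \sum_{\substack{1 \le n \le N \\ n \in \mathfrak{D}_b}} \e(\gamma n),
\]
where $\e(x) = e^{2\pi i x}$. The key structural fact is that $\mathfrak{D}_b \cap [1, b^k)$ consists exactly of the $2^k - 1$ nonzero sums $\sum_{d=0}^{k-1} a_d b^d$ with $a_d \in \{0,1\}$, so the corresponding exponential sum factors as a product:
\[
\sum_{a_0,\dots,a_{k-1} \in \{0,1\}} \e\!\left(\gamma \sum_{d=0}^{k-1} a_d b^d\right) = \prod_{d=0}^{k-1}\bigl(1 + \e(\gamma b^d)\bigr),
\]
and $|1 + \e(\gamma b^d)| = 2|\cos(\pi \gamma b^d)| = 2\bigl(1 - 2\|\gamma b^d/2\|^2 + O(\|\gamma b^d\|^4)\bigr)$, so each factor is at most $2$ with genuine savings unless $\gamma b^d$ is very close to an integer. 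Choosing $k \asymp \log N$, the set $\mathfrak{D}_b \cap [1,N]$ has roughly $N^{\log 2/\log b}$ elements, and the product structure is what makes the exponential sum tractable.

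The core dichotomy I would set up is the following. Either there exists some $d$ in the relevant range with $\|\gamma b^d\|$ not too small — in which case $\e(\gamma b^d)$ alone, or rather the single-digit elements $b^d \in \mathfrak{D}_b$, already give $\|\gamma b^d\| \le \|\gamma b^d\|$ bounded below away from $0$... no: rather, in this case we want the \emph{opposite} — we want to find $n \in \mathfrak{D}_b$ with $\|\gamma n\|$ small. So I reverse it: either $\|\gamma b^d\|$ is small (say $\le \delta$) for \emph{many} values of $d \le k$, or it is not. If $\|\gamma b^d\| \le \delta$ for some single $d$, we are essentially done with $n = b^d$ provided $\delta \le C_b/(\log N)^2$ is achievable — but we cannot guarantee that for a single $d$. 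Instead, the right move is to combine several digits: if we can find $d_1 < d_2 < \dots < d_m$ in $[0,k)$ with $\sum_j \|\gamma b^{d_j}\|$ small, then $n = \sum_j b^{d_j} \in \mathfrak{D}_b$ satisfies $\|\gamma n\| \le \sum_j \|\gamma b^{d_j}\|$ by the triangle inequality for $\|\cdot\|$. So the problem reduces to: among the $k+1 \asymp \log N$ real numbers $\|\gamma b^0\|, \dots, \|\gamma b^k\|$ in $[0,1/2]$, find a subset whose sum is $O(1/(\log N)^2)$ — equivalently, since we have $\asymp \log N$ of them, it suffices to find $\asymp 1$ of them that are each $O(1/(\log N)^2)$... but that is too strong to expect from pigeonhole alone on $\log N$ points. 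This is where the exponential sum must do genuine work rather than the triangle inequality.

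The actual argument I propose: bound the number of $n \in \mathfrak{D}_b \cap [1,N]$ with $\|\gamma n\| > \eta$ by a smooth majorant / Erdős–Turán type inequality involving $S(a\gamma)$ for $1 \le |a| \le A$, then use the product formula to show $|S(a\gamma)| = \prod_{d=0}^{k-1} 2|\cos(\pi a \gamma b^d)|$ is small unless $a\gamma b^d$ is close to an integer for almost all $d$; a counting argument shows this forces $\gamma$ itself to be very close to a rational with small denominator, a case handled directly. Quantitatively, if $\|\gamma b^d\| \ge \eta$ fails to hold for only $o(k)$ values of $d$, one gets $|S(\gamma)| \le 2^k (1 - c\eta^2)^{(1-o(1))k}$, which beats the trivial $2^k$ by an exponential factor once $\eta^2 k \to \infty$; comparing with the main term $2^k \cdot (\text{measure of the bad set})$ in the Erdős–Turán bound, and optimizing $\eta \asymp 1/(\log N)$ against $k \asymp \log N$, yields that the bad set (where $\|\gamma n\| > \eta^2 \asymp 1/(\log N)^2$) cannot be all of $\mathfrak{D}_b \cap [1,N]$.

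\textbf{Main obstacle.} The delicate point is the transition between the two regimes: controlling what happens when $\|\gamma b^d\|$ is small (the exponential sum gives no saving from those digits) for a \emph{large but sublinear} proportion of $d \le k$. One must show that this "resonant" situation — $a\gamma b^d$ close to $\Z$ for a positive proportion of $d$ in a long dyadic-type range, for some small $a$ — forces $\gamma$ to be abnormally well approximable by rationals of small denominator, and then handle that case separately (there $\gamma$ is close to $p/q$ with $q$ small, and one picks $n \in \mathfrak{D}_b$ divisible by $q$, or close to a multiple of $q$, which exists in $[1,N]$ since $\mathfrak{D}_b$ contains numbers in every residue class mod $q$ once $N$ is large — a fact that itself needs a short argument via the structure of base-$b$ digits). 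Balancing these contributions to extract the clean exponent $2$ on $\log N$, rather than $1$ or $3/2$, is where the real care lies.
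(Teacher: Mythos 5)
Your framework---the product formula $\sum_{j\in\mathfrak{D}_b(r)}\e(kj\gamma)=\prod_{d=0}^{r}(1+\e(kb^d\gamma))$, the per-factor saving $|\cos(\pi x)|\le 1-\pi\|x\|^2$, and an Erd\H{o}s--Tur\'an comparison over harmonics $k\le G$---is exactly the paper's starting point, and you have correctly located the crux in what you call the resonant case. But that case is left genuinely open in your sketch. The paper controls it \emph{without} ever approximating $\gamma$ by a rational: it first enlarges the target set to $\mathfrak{D}_b^{*}=\mathfrak{D}_b\cup\{b^d-b^c\}$, so that under the standing hypothesis $\|\gamma x\|>\beta$ for all nonzero $x\in\mathfrak{D}_b^{*}(r)$, the nearest integers $s_j$ to the resonant values $kb^{d_j}\gamma$ are forced to be pairwise distinct modulo $k$ (a coincidence $s_u\equiv s_v$ would give $\|(b^{d_v}-b^{d_u})\gamma\|\le\beta$). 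Then Olsen's zero-sum theorem (Lemma~\ref{Addcomb}) shows that $3\sqrt{k}$ residues distinct mod $k$ admit a nonempty subset summing to $0\bmod k$, which would produce an element $b^{d_{u_1}}+\cdots+b^{d_{u_h}}\in\mathfrak{D}_b$ with $\|\cdot\,\gamma\|\le\beta$, again a contradiction. Hence there are at most $3\sqrt{k}$ resonant digits per harmonic $k$, and each block of $m$ consecutive non-resonant digits yields a definite factor $1-\pi/(4b^2)$ via the amplification Lemma~\ref{IntegerDistanceLemma} ($y\in G_b(t)\Rightarrow by\in G_b(t-1)$). The exponent $2$ on $\log N$ then comes from balancing $r\asymp\log N$ against the threshold $1/(2b^m)$ and the loss $3\sqrt{G}\asymp b^{m/2}$, forcing $b^m\asymp(\log N)^2$; the return from $\mathfrak{D}_b^{*}$ to $\mathfrak{D}_b$ is the substitution $\gamma\mapsto\gamma/(b-1)$, since $(b^d-b^c)/(b-1)\in\mathfrak{D}_b$.

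Your alternative resolution of the resonant case---deduce that $\gamma$ is near $p/q$ with $q$ small, then find $n\in\mathfrak{D}_b\cap[1,N]$ divisible by, or near, a multiple of $q$---is the concrete gap. First, the product formula being large only tells you that $\|k\gamma b^d\|$ is small for most $d$ in the range; extracting from this that $\gamma$ itself lies close to a rational with denominator $\le G$ is not justified and is not the right dichotomy. Second, the parenthetical claim that $\mathfrak{D}_b$ meets every residue class mod $q$ is unsubstantiated and sits uneasily with Theorem~\ref{nHeilbronn}, whose proof exhibits moduli $q=b^k-1$ avoided entirely by long initial segments of $\mathfrak{D}_b$. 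Third, even granting a multiple $n$ of $q$ in $[1,N]$, the bound $\|\gamma n\|\le n|\gamma-p/q|$ is only useful when $|\gamma-p/q|\lesssim 1/(N(\log N)^2)$, a far stronger hypothesis than any resonance analysis supplies. The missing ingredients are precisely the passage to $\mathfrak{D}_b^{*}$ and the additive-combinatorial (zero-sum) control of the resonant digits; without them the argument does not close.
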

Reverting attention to the case of squares,  researchers have conjectured that \eqref{Squareref} holds for $\theta < 1$ \cite{Heilbronn, Zah}. Inspired by the conjecture, we will say that a set of positive integers $S$ is a \textit{strong-approximating} set if for all $\theta < 1$ and $\gamma \in \R$, there exists a constant $C_{\theta}>0$ so that
\begin{align*}
\min_{1 \leq n \leq N}\|\gamma s_{n}\| \leq C_{\theta}N^{-\theta},
\end{align*}
where $s_{1}<s_{2}<\ldots$ are all the elements of $S$.

We prove that unlike the set of natural numbers, or possibly positive squares, the set $\mathfrak{D}_{b}$ is not strong-approximating when $b \geq 3$.
\begin{thm}
\label{nHeilbronn}
Let $b_{1}<b_{2}<\ldots$ be all the elements of $\mathfrak{D}_{b}$. For any $N \in \N$, there exists $\gamma_{N} \in \R$, so that 
\begin{align*}
\min_{1 \leq n \leq N}\|\gamma_{N} b_{n}\| \geq \frac{1}{b^4}N^{- \frac{\mathrm{\log}_{2} b}{b-1}}.
\end{align*}
\end{thm}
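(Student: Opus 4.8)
The plan is to produce $\gamma_N$ explicitly as the reciprocal of $b^p-1$ for a carefully calibrated length $p$, and to control $\|\gamma_N b_n\|$ entirely through the base-$b$ digit expansion. The underlying heuristic: if $\{x\}$ denotes the fractional part of $x$, then $\|\gamma b_n\|$ is governed by $\sum_{i\in S_n}\{b^i\gamma\}$, where $S_n\subseteq\{0,1,\dots\}$ is the set of positions of the $1$'s in the binary expansion of $n$ (so $b_n=\sum_{i\in S_n}b^i$); choosing $\gamma$ with a periodic digit expansion makes all the quantities $\{b^i\gamma\}$ transparent.

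Concretely, I would first fix $k=\lceil\log_2(N+1)\rceil$, so that every $b_n$ with $1\le n\le N$ satisfies $b_n=\sum_{i\in S_n}b^i$ with $\varnothing\ne S_n\subseteq\{0,\dots,k-1\}$. Then set $p=\lfloor k/(b-1)\rfloor+1$ and $\gamma_N=1/(b^p-1)$, and put $\delta=1/(b^p-1)$. Since $b^p\equiv1\pmod{b^p-1}$, one checks immediately that for every $i\ge0$ one has $\{b^i\gamma_N\}=b^{\,i\bmod p}/(b^p-1)$; in particular each such fractional part lies in $[\,1/(b^p-1),\,b^{p-1}/(b^p-1)\,]\subseteq(0,1/b)$.

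The core is a two-sided estimate on $\Sigma_S:=\sum_{i\in S}\{b^i\gamma_N\}$ over nonempty $S\subseteq\{0,\dots,k-1\}$. The lower bound is trivial: $\Sigma_S\ge\min_i\{b^i\gamma_N\}=1/(b^p-1)=\delta$. For the upper bound, since every term is positive the sum is maximised at $S=\{0,\dots,k-1\}$; writing $k=Qp+R$ with $0\le R<p$ and summing geometric blocks over residue classes gives $\Sigma_{\{0,\dots,k-1\}}=\tfrac{Q}{b-1}+\tfrac{b^R-1}{(b-1)(b^p-1)}$. Because $p>k/(b-1)$ we get $Q\le b-2$, and a short manipulation then shows $1-\delta-\Sigma_{\{0,\dots,k-1\}}\ge(b^{p-1}-1)/(b^p-1)\ge0$, i.e. $\Sigma_S\le1-\delta$ for all such $S$. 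Hence $\Sigma_{S_n}\in[\delta,1-\delta]\subseteq[0,1)$, so $\{b_n\gamma_N\}=\{\Sigma_{S_n}\}=\Sigma_{S_n}$ and therefore $\|b_n\gamma_N\|\ge\delta$ for every $1\le n\le N$ (the degenerate case $N=1$, where $k=p=1$, is checked by hand). Converting $\delta=1/(b^p-1)$ into the stated form uses only $p\le k/(b-1)+1$ and $k\le\log_2 N+2$, which yield $\delta\ge b^{-p}\ge b^{-1}b^{-k/(b-1)}\ge b^{-(b+1)/(b-1)}N^{-\log_2 b/(b-1)}\ge b^{-4}N^{-\log_2 b/(b-1)}$, the last step because $(b+1)/(b-1)\le4$ for every integer $b\ge2$.

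The only genuinely delicate point — and the place where the exponent $\log_2 b/(b-1)$ is forced — is the calibration of the period $p$. Enlarging $p$ shrinks $\delta$ directly, while shrinking $p$ makes the total $\Sigma_{\{0,\dots,k-1\}}$ exceed $1$, at which point carries destroy the lower bound on $\|b_n\gamma_N\|$; the threshold is exactly $p\approx k/(b-1)$, the $(b-1)$ being the "overflow budget" $Q\le b-2$. Everything else is bookkeeping with finite geometric series, so I expect no serious obstacle beyond getting this balance and the small-$N$ corners exactly right.
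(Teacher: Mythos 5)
Your proposal is correct and follows essentially the same route as the paper: both take $\gamma_N = 1/(b^p-1)$ with $p$ just above $\lceil\log_2(N+1)\rceil/(b-1)$, and both rest on the fact that no $b_n$ with $n\le N$ reduces to $0$ modulo $b^p-1$ --- which you verify by a direct geometric-series bound on $\sum_{i\in S}\{b^i\gamma_N\}$ where the paper instead uses a digit-sum reduction lemma modulo $b^p-1$. (One harmless slip: the asserted containment $\bigl[\tfrac{1}{b^p-1},\tfrac{b^{p-1}}{b^p-1}\bigr]\subseteq(0,\tfrac1b)$ is false since $\tfrac{b^{p-1}}{b^p-1}>\tfrac1b$, but it is never used and your two-sided estimate $\Sigma_{S_n}\in[\delta,1-\delta]$ goes through exactly as computed.)
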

\section{Definitions and Conventions}
We list the following:
\begin{itemize}
\item $\N$ denotes the set $\{1,2,\ldots\}$.
\item $\gamma$ is a real parameter.
\item  $\lceil \gamma \rceil$ denotes the smallest integer greater than or equal to $\gamma$.
\item $\lfloor \gamma \rfloor$ denotes the largest integer less than or equal to $\gamma$.
\item $\{\gamma\}$ denotes the fractional part of gamma, defined as $\{\gamma\} = \gamma - \lfloor \gamma \rfloor$.
\item For $x \in \R$ it is designated that $e(x) := e^{2\pi i x}$.
\item $\ll$ is the Vinogradov symbol. That is, for any quantities $U$ and $V$, we have the following equivalent definitions (where $U$ and $V$ are defined):
$$U\ll V~\Longleftrightarrow~U=O(V)~\Longleftrightarrow~|U|\le c |V|,$$
for some constant $c>0$, which throughout the paper is allowed to depend on the parameter $b \in \{2,3,\ldots\}$, but not on the parameters $N \in \N$ and $\gamma \in \R$.
\end{itemize}

\section{Some elementary observations}
\label{sec:elem}
Let $t(b,N) = \lfloor \log_{b}(N(b-1)+1) - 1\rfloor$ so that $t(b,N)$ is the largest integer $t$ satisfying $1 +b+\ldots+b^t \leq N$, and put
\begin{align*}
P_{N,b} = \left\{s: s=\sum_{j=0}^{d}b^{j}, d\in \{0,\ldots,t(b,N)\} \right\}.
\end{align*}
For each $\gamma \in \R$ either there exists   $u \in P_{N,b}$  with $\| u \gamma\| \leq 1/ (t(b,N)+1) $ and~\eqref{eq:ElemBound} follows, or by the pigeonhole principle, there exists $u,v \in P_{N,b}$ and $h \in \{0,\ldots,t\}$ with $v>u$ so that 
\begin{align*}
\frac{h}{t(b,N)+1}\leq \{u \gamma\} \leq \{v\gamma\} \leq \frac{h+1}{t(b,N)+1},
\end{align*}
and thus $\|(v-u)\gamma\|\leq 1/ (t(b,N)+1) $, where it should be noted that 
\begin{align*}
(v-u) \in \mathfrak{D}_{b} \cap [1,N],
\end{align*} 
and~\eqref{eq:ElemBound} follows again. 

Before employing exponential sums to strengthen~\eqref{eq:ElemBound}, it is prudent to inquire whether a comprehensive application of the pigeonhole principle and the method of differencing could yield a result of comparable or greater strength than Theorem~\ref{main1}. After creating sufficient technology, we will see that the answer is no.
\begin{definition}
For a set $A,S \subseteq \mathbb{N}$, we define

\begin{align*}
D^{+}(A) &:= \{y-x: y,x \in A, y>x\}\\
M_{1}^{+}(S) &:= \max\{\#J: J \subseteq \mathbb{Z}, D^{+}(J)\subseteq S\}\\
M_{2}^{+}(S) &:= \max\{\#J: J \subseteq S, D^{+}(J)\subseteq S\}.
\end{align*}
\end{definition}
One observes that by a similar argument used to show \eqref{eq:ElemBound} (using the pigeonhole principle), we can show that for any non-empty set $S \subseteq \mathbb{N}$ and $\gamma \in \R$, one has
\begin{align*}
\min_{n \in S} \|\gamma n\| \leq \frac{1}{\max\{M_{1}^{+}(S)-1, M_{2}^{+}(S)+1\}}.
\end{align*}
By the trivial inequality $M_{2}^{+}(S) \leq M_{1}^{+}(S)$, if we manage to see that $M_{1}^{+}(\mathfrak{D}_{b} \cap \{1,\ldots,N\}) \ll \log N$ whenever $b \geq 3$ we could convince ourselves that such elementary methods are not enough to strengthen \eqref{eq:ElemBound} sufficiently.
\begin{lem}
For $b \geq 3$ we have
\begin{align*}
M_{1}^{+}(\mathfrak{D}_{b} \cap [1,N]) \ll \log N.
\end{align*}
\end{lem}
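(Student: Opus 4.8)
The plan is to exploit a rigidity property of addition inside $\mathfrak{D}_{b}$ that is available precisely because $b\geq 3$: there is no carrying when one adds two integers all of whose base-$b$ digits lie in $\{0,1\}$. Write $\operatorname{supp}(n)\subseteq\{0,1,2,\dots\}$ for the set of digit positions where the base-$b$ expansion of $n$ is nonzero (so for $n\in\mathfrak{D}_{b}$ these are exactly the positions carrying a digit $1$). The first step is the elementary observation: if $a,a'\in\mathfrak{D}_{b}$ and $a+a'\in\mathfrak{D}_{b}$, then $\operatorname{supp}(a)\cap\operatorname{supp}(a')=\emptyset$. Indeed, the digitwise sum of $a$ and $a'$ has every digit in $\{0,1,2\}$, and since $2<b$ this digitwise sum is already the base-$b$ expansion of $a+a'$ with no carrying; for $a+a'$ to lie in $\mathfrak{D}_{b}$ every such digit must be $\leq 1$, which is exactly disjointness of the supports. (This is where the case $b=2$ genuinely breaks down, consistently with $\mathfrak{D}_{2}=\N$.)

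Second, let $J=\{x_{1}<x_{2}<\dots<x_{k}\}\subseteq\Z$ with $D^{+}(J)\subseteq\mathfrak{D}_{b}\cap[1,N]$, and set $S_{j}:=x_{j+1}-x_{1}$ for $1\leq j\leq k-1$. Each $S_{j}$ is a positive difference of elements of $J$, hence $S_{j}\in\mathfrak{D}_{b}\cap[1,N]$, and $S_{1}<S_{2}<\dots<S_{k-1}$. Moreover $S_{j}-S_{j-1}=x_{j+1}-x_{j}\in\mathfrak{D}_{b}$. Applying the first step with $a=S_{j-1}$, $a'=S_{j}-S_{j-1}$ and $a+a'=S_{j}$ (all in $\mathfrak{D}_{b}$, for $2\leq j\leq k-1$) gives $\operatorname{supp}(S_{j})=\operatorname{supp}(S_{j-1})\sqcup\operatorname{supp}(S_{j}-S_{j-1})$; since $S_{j}-S_{j-1}\geq 1$ has nonempty support, we obtain a strictly increasing chain $\operatorname{supp}(S_{1})\subsetneq\operatorname{supp}(S_{2})\subsetneq\dots\subsetneq\operatorname{supp}(S_{k-1})$ of nonempty subsets.

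Third, since $S_{k-1}\leq N$, all these supports lie in $\{0,1,\dots,\fl{\log_{b}N}\}$, a set of $\fl{\log_{b}N}+1$ elements, so a strictly increasing chain of nonempty subsets of it has length at most $\fl{\log_{b}N}+1$. Hence $k-1\leq\fl{\log_{b}N}+1$, i.e. $\#J=k\leq\fl{\log_{b}N}+2\ll\log N$; taking the maximum over admissible $J$ proves $M_{1}^{+}(\mathfrak{D}_{b}\cap[1,N])\ll\log N$.

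As for the main obstacle, there is no deep one: the entire argument rests on the no-carrying lemma of the first step, after which it is a one-line pigeonhole on chains of subsets. The only care needed is bookkeeping at the ends of the chain (that $S_{1}$ has nonempty support, and that the indices $j$ for which all three of $S_{j-1},\,S_{j}-S_{j-1},\,S_{j}$ lie in $\mathfrak{D}_{b}$ are exactly $2\leq j\leq k-1$), together with a trivial check of the small-$N$ cases.
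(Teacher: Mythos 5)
Your proof is correct and rests on the same key idea as the paper's: for $b\geq 3$ addition of digit-$\{0,1\}$ numbers involves no carrying, so the consecutive differences $x_{j+1}-x_{j}$ must occupy pairwise disjoint digit positions inside $\{0,\dots,\lfloor\log_{b}N\rfloor\}$, giving the bound $\lfloor\log_{b}N\rfloor+2$. You package this as a strictly increasing chain of supports of the anchored differences $x_{j+1}-x_{1}$, whereas the paper argues by a minimal counterexample on the gap between two differences sharing a digit position; this is a presentational difference only (and arguably the cleaner of the two), not a different route.
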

\begin{proof}
Suppose that $A(b,N) \subseteq \Z$ satisfies  $\#A(b,N) = M_{1}^{+}(\mathfrak{D}_{b} \cap [1,N])$ and $D^{+}(A(b,N)) \subseteq \mathfrak{D}_{b} \cap [1,N]$. We may list the elements of $A(b,N)$ in an increasing order as $v_{1}<\ldots<v_{z}$ and put
\begin{align*}
P = ((v_{j+1}-v_{j}))_{j=1}^{z-1},
\end{align*}
a sequence of $z-1$ differences.

We know that each element $v_{j+1}-v_{j}$ of $P$ is if the form 
\begin{align*}
\sum_{d=0}^{\lfloor\log_{b} N\rfloor} \xi_{d,j} b^{d},
\end{align*}
where $\xi_{d,j} \in \{0,1\}$ and $\xi_{d,j} \neq 0$ for some $d \in \{0,\ldots,\lfloor\log_{b} N\rfloor\}$.

We claim that for $1\leq j_{1} < j_{2} \leq z-1$ not both $\xi_{d,j_{1}}$ and $\xi_{d,j_{2}}$ are $1$; indeed for the sake of contradiction suppose that there exists the smallest value $a \in \{1,\ldots,z-2\}$, for which there exists $h_{a} \in \N$ with $1 \leq h_{a} \leq z-1-a$, $Y(a,h_{a}) \in \{0,\ldots,\lfloor\log_{b}(N)\rfloor\}$ and 
\begin{align*}
\xi_{Y(a,h_{a}),h_{a}} = \xi_{Y(a,h_{a}),h_{a}+a}=1.
\end{align*}
Then
\begin{align*}
v_{h_{a}+a}-v_{h_{a}}= \sum_{t=0}^{a-1}v_{h_{a}+t+1}-v_{h_{a}+t} = \sum_{d=0}^{\lfloor \log_{b}(N)\rfloor} \rho_{d}b^{d},
\end{align*}
where $\rho_{d} \in \{0,1,2\}$ and $\rho_{Y(a,h_{a})} = 2$, implying $v_{h_{a}+a}-v_{h_{a}} \not \in \mathfrak{D}_{b}$, which is impossible.

For each $j \in \{1,\ldots z-1\}$ put
\begin{align*}
K_{j} := \{d: \xi_{d,j} = 1\},
\end{align*}
so that $\#K_{j} \geq 1$ ,
\begin{align*}
\bigcup_{j=1}^{z-1}K_{j}\subseteq \{0,\ldots,\lfloor\log_{b} N\rfloor\}
\end{align*}
and $K_{j_{1}} \cap K_{j_{2}} = \emptyset$ if $j_{1} \neq j_{2}$. These constraints imply that 
\begin{align*}
z-1 \leq \lfloor\log_{b} N\rfloor+1,
\end{align*}
and therefore
\begin{align*}
M_{1}^{+}(\mathfrak{D}_{b}\cap[1,N]) \leq \lfloor\log_{b} N\rfloor+2.
\end{align*}
\par \vspace{-\baselineskip} \qedhere
\end{proof}
In our path towards the proof of Theorem \ref{main1}, we employ the set $\mathfrak{D}_{b}^{*}$ defined as
\begin{align*}
\mathfrak{D}_{b}^{*} :=  \mathfrak{D}_{b} \cup \left\{b^{d}-b^c: d,c \in \N \cup \{0\}, \ d > c\right\},
\end{align*}
and obtain the following result.
\begin{lem}
\label{main0}
For all $\gamma \in \R$, we have
\begin{align*}
\min_{1 \leq n \leq N, n \in \mathfrak{D}_{b}^{*}} \|\gamma n\| \ll \frac{1}{(\log N)^2},
\end{align*}
where the implied constant is effective and dependent only on $b$.
\end{lem}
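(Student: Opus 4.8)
The plan is to transfer the question to the orbit of $\gamma$ under multiplication by $b$ and then to beat the benchmark \eqref{eq:ElemBound} by extracting cancellation from a lacunary exponential sum.

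\emph{Reduction to the orbit.} Choose $M\asymp\log N$ small enough that $(b^{M+1}-1)/(b-1)\le N$ and, moreover, $b^{M+L}\le N$ for some $L\asymp\log N$ (e.g.\ $M=L=\lfloor\tfrac13\log_{b}N\rfloor$); then every combination $\sum_{k\in K}b^{k}$ with $K\subseteq\{0,\dots,M\}$, and every $b^{d}-b^{c}$ with $0\le c<d\le M+L$, lies in $\mathfrak{D}_{b}^{*}\cap[1,N]$. Put $x_{j}:=\{b^{j}\gamma\}$. Since $b^{s}\in\N$, for such $K$ and such $c<d$,
\begin{align*}
\Bigl\|\gamma\sum_{k\in K}b^{k}\Bigr\|=\Bigl\|\sum_{k\in K}x_{k}\Bigr\|,\qquad \|\gamma(b^{d}-b^{c})\|=\|x_{d}-x_{c}\|,
\end{align*}
distances on the right taken in $\mathbb{R}/\mathbb{Z}$. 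So it suffices to find either a pair $c<d\le M+L$ with $\|x_{d}-x_{c}\|\ll M^{-2}$, or a nonempty $K\subseteq\{0,\dots,M\}$ with $\|\sum_{k\in K}x_{k}\|\ll M^{-2}$. The point of using $\mathfrak{D}_{b}^{*}$ rather than $\mathfrak{D}_{b}$ is exactly that it renders the differences $b^{d}-b^{c}$, i.e.\ the pairwise circular distances $\|x_{d}-x_{c}\|$, admissible; for $b\ge3$ these are not zero--one numbers. (For $b=2$, $\mathfrak{D}_{2}=\N$ and Dirichlet gives $1/N$, so assume $b\ge3$.)

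\emph{Why one cannot stop at pigeonhole.} Pigeonhole on $x_{0},\dots,x_{M}$ gives a pair within $1/(M+1)$, hence $b^{d}-b^{c}\in\mathfrak{D}_{b}^{*}\cap[1,N]$ with $\|\gamma(b^{d}-b^{c})\|\le1/(M+1)$; but that is only \eqref{eq:ElemBound}. Iterating is useless, because passing to a sub-cluster of the $x_{j}$ shrinks the length and the point count by the same factor; and although there are $2^{M+1}$ combinations $\sum_{k\in K}b^{k}$, their pairwise differences need not lie in $\mathfrak{D}_{b}^{*}$ when $b\ge3$, so no larger difference-closed family is available. The extra power of $\log N$ must come from genuine cancellation.

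\emph{The exponential sum.} Assume, for a contradiction, $\|\gamma n\|\ge\delta$ for all $n\in\mathfrak{D}_{b}^{*}\cap[1,N]$, with $\delta\ge C_{b}/(\log N)^{2}$, $C_{b}$ large. Set $S(\beta):=\sum_{j=0}^{M}e(b^{j}\beta)$. For each $0\le k\le L$, all differences $b^{j+k}-b^{i+k}$ ($0\le i<j\le M$) occurring in $|S(b^{k}\gamma)|^{2}$ lie in $\mathfrak{D}_{b}^{*}\cap[1,N]$, so expanding the square and using the hypothesis on them,
\begin{align*}
|S(b^{k}\gamma)|^{2}\le (M+1)+M(M+1)\cos(2\pi\delta)\le(M+1)^{2}(1-c_{1}\delta^{2}),
\end{align*}
whence $|S(b^{k}\gamma)|\le(M+1)(1-c_{2}\delta^{2})$ for $0\le k\le L$. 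Now exploit the self-similarity of the frequencies: $S(b\beta)=S(\beta)-e(\beta)+e(b^{M+1}\beta)$, so $S(\gamma),\dots,S(b^{L}\gamma)$ move by $O(1)$ at each step, and $b^{s}(b^{d}-b^{c})=b^{d+s}-b^{c+s}$ keeps us inside $\mathfrak{D}_{b}^{*}$. One further has, for $1\le m\le M$, the $b$-adic averaging identity
\begin{align*}
\frac{1}{b^{m}}\sum_{a=0}^{b^{m}-1}\bigl|S(\gamma+a/b^{m})\bigr|^{2}=m+\Bigl|\sum_{j=m}^{M}e(b^{j}\gamma)\Bigr|^{2},
\end{align*}
which strips off exactly the differences $b^{d}-b^{c}$ with $c<m$, together with the product identity $\bigl|\prod_{j=0}^{M}(1+e(x_{j}))\bigr|=\prod_{j=0}^{M}2\cos(\pi\|x_{j}\|)$ linking the zero--one combinations to the $x_{j}$. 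Orchestrating these — the bounds on the shifts $S(b^{k}\gamma)$, the closure $b^{s}(b^{d}-b^{c})\in\mathfrak{D}_{b}^{*}$, the averaging identity, and the product identity — into a single argument forces $\delta\ll M^{-2}$, contradicting the choice of $C_{b}$.

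\emph{The main obstacle} is precisely this orchestration. Bounding $|S(b^{k}\gamma)|$ under the separation hypothesis is trivial but on its own worthless, being consistent with $|S|$ as small as $O(\sqrt{M})$; and naive pigeonhole over the $2^{M+1}$ combinations fails because their differences escape $\mathfrak{D}_{b}^{*}$. The real content is to combine the structure above so as to, in effect, rerun the elementary estimate a second time at scale $b^{-M^{2}}$ in place of $b^{-M}$, thereby recovering the missing factor of $\log N$.
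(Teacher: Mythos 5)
Your setup is sound---reducing to the orbit $x_j=\{b^j\gamma\}$, recognising that $\mathfrak{D}_b^*$ is designed precisely so that the pairwise differences $b^d-b^c$ are admissible, and recording the product identity $\bigl|\prod_{j}(1+e(x_j))\bigr|=\prod_{j}2|\cos(\pi x_j)|$---but the argument has a genuine gap at exactly the point you yourself flag as ``the main obstacle'': the final paragraph asserts that orchestrating the listed identities ``forces $\delta\ll M^{-2}$'' without supplying any mechanism, and the ingredients you list do not combine to give it. Your bound $|S(b^k\gamma)|\le(M+1)(1-c_2\delta^2)$ is a saving of relative size $\delta^2\asymp M^{-4}$ over the trivial bound, which is useless on its own (as you note), and neither the $b$-adic averaging identity nor the recursion $S(b\beta)=S(\beta)-e(\beta)+e(b^{M+1}\beta)$ is ever used to extract anything stronger. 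A claimed contradiction must come from somewhere concrete, and here it does not.

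The two ideas actually needed (and used in the paper) are absent. First, a zero-sum argument: under the hypothesis $\|\gamma x\|>\beta$ for every nonzero $x\in\mathfrak{D}_b^*(r)$, for each harmonic $k$ the set of exponents $d\in\{0,\dots,r\}$ with $\|kb^d\gamma\|\le\beta$ has size less than $3\sqrt{k}$: the nearest integers $s_d$ to $kb^d\gamma$ must be pairwise distinct mod $k$ (otherwise some $\|(b^d-b^c)\gamma\|\le\beta$, using the difference part of $\mathfrak{D}_b^*$), and by Olsen's theorem on zero-sum subsets of $\Z/k\Z$ any $3\sqrt{k}$ of them would contain a nonempty subset summing to $0\bmod k$, which produces $y=\sum_j b^{d_j}\in\mathfrak{D}_b(r)$ with $\|y\gamma\|\le\beta$. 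Second, an amplification step: if $\|kb^d\gamma\|>1/(2b^m)$ then after at most $m-1$ further multiplications by $b$ one reaches an exponent $d'$ with $\|kb^{d'}\gamma\|>1/(2b)$, so in the product $\prod_d(1-\pi\|kb^d\gamma\|^2)$ at least a proportion $1/m$ of the $r+1-3\sqrt{k}$ ``non-small'' factors are at most $1-\pi/(4b^2)$, yielding the exponential bound $2^{r+1}(1-\pi/(4b^2))^{(r-3\sqrt{k}+1)/m}$ for the full $2^{r+1}$-term sum over $\mathfrak{D}_b(r)$ (crucially, not for your $(M+1)$-term lacunary sum $S$). Feeding this into Erd\H{o}s--Tur\'an with $G=8b^m$ shows the separation hypothesis forces $r\ll b^{m/2}$; taking $r\asymp\log N$ then permits $b^{-m}\asymp(\log N)^{-2}$. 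Without these two steps your argument does not close.
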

Lemma \ref{main0} directly leads us to Theorem \ref{main1} due to the following assertion:
\begin{lem}
\label{Db*}
If for all $\gamma \in \R$ there is some positive function $f : \N \rightarrow \R$ so that 
\begin{align*}
\min_{1 \leq n \leq N, \ n \in \mathfrak{D}_{b}^{*}} \|\gamma n\| \leq f(N),
\end{align*}
then we have
\begin{align*}
\min_{1 \leq n \leq N, \ n \in \mathfrak{D}_{b}} \|\gamma n\| \leq (b-1)f(N).
\end{align*}
\end{lem}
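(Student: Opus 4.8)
The plan is to invoke the hypothesis with the rescaled real number $\gamma/(b-1)$ in place of $\gamma$, keeping the same $N$. This produces an integer $n \in \mathfrak{D}_b^* \cap [1,N]$ with $\left\|\frac{\gamma}{b-1}\,n\right\| \le f(N)$, and the task is then to convert $n$ into an element of $\mathfrak{D}_b \cap [1,N]$ that records essentially the same information about $\gamma$.

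By the definition of $\mathfrak{D}_b^*$, either $n \in \mathfrak{D}_b$ or $n = b^d - b^c$ for some integers $d > c \ge 0$. In the first case I would take $m := n$: writing $\frac{\gamma n}{b-1} = K + \rho$ with $K \in \Z$ and $|\rho| \le f(N)$ and multiplying through by $b-1$, one sees that $\gamma n$ lies within $(b-1)|\rho| \le (b-1)f(N)$ of the integer $(b-1)K$, so $\|\gamma m\| \le (b-1)f(N)$ with $m \in \mathfrak{D}_b \cap [1,N]$. In the second case I would use the elementary identity $\frac{b^d - b^c}{b-1} = b^c + b^{c+1} + \cdots + b^{d-1}$, whose right-hand side is a genuine element of $\mathfrak{D}_b$ --- its base-$b$ digits are $1$ in positions $c, \dots, d-1$ and $0$ elsewhere. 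Setting $m := n/(b-1)$, we have $1 \le m \le n \le N$ (as $b - 1 \ge 1$), hence $m \in \mathfrak{D}_b \cap [1,N]$, while $\gamma m = \frac{\gamma n}{b-1} = \frac{\gamma}{b-1}\,n$, so $\|\gamma m\| = \left\|\frac{\gamma}{b-1}\,n\right\| \le f(N) \le (b-1)f(N)$. In either case $m$ witnesses the asserted inequality, and taking the minimum over $m \in \mathfrak{D}_b \cap [1,N]$ completes the proof.

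The only point requiring care --- and the place where a naive attempt breaks down --- is the initial rescaling of $\gamma$. If instead one applied the hypothesis directly to $\gamma$ and obtained some $n = b^d - b^c$ with $\|\gamma n\|$ small, then $\gamma \cdot \frac{n}{b-1}$ would only be guaranteed to lie near $\frac{K}{b-1}$ for an integer $K$, and $\frac{K}{b-1}$ is close to an integer precisely when $(b-1) \mid K$; that route therefore loses an amount one cannot control. Passing to $\gamma/(b-1)$ at the outset aligns the arithmetic so that both cases fall out immediately. (When $b = 2$ one has $\mathfrak{D}_b^* = \mathfrak{D}_b$, and there is nothing to prove.)
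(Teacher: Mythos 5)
Your proof is correct and follows essentially the same route as the paper: rescale to $\gamma/(b-1)$, apply the hypothesis, and in the case $n = b^d - b^c$ pass to $n/(b-1) = b^c + \cdots + b^{d-1} \in \mathfrak{D}_b$. The paper's own argument is the same two-case analysis with $\gamma^* = \gamma/(b-1)$.
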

\begin{proof}
Let $\gamma^{*} = \frac{\gamma}{b-1}$. Given the assumption 
\[
\min_{1 \leq n \leq N, \ n \in \mathfrak{D}_{b}^{*}}\|\gamma^{*}n\| \leq f(N),
\]
we have two cases to consider:
\begin{itemize}
\item If there exists $x \in \mathfrak{D}_{b}$ such that $1 \leq x \leq N$ and $\|x \gamma^{*}\| \leq f(N)$, then
\[
\|x \gamma\| \leq (b-1)f(N).
\]
\item If there exist positive integers $d$ and $c$ with $d > c$, and $1 \leq b^{d}-b^{c} \leq N$, such that $\|(b^{d}-b^{c}) \gamma^{*}\| \leq f(N)$, then $\|\frac{b^{d}-b^{c}}{b-1}\gamma \|\leq f(N)$, where $\frac{b^{d}-b^{c}}{b-1} \in \mathfrak{D}_{b}$ and $1 \leq \frac{b^{d}-b^{c}}{b-1} \leq N$, because $1 \leq b^{d}-b^{c} \leq N$ and $b^{d}-b^{c}$ is a multiple of $b-1$.
\end{itemize}
\par \vspace{-\baselineskip} \qedhere
\end{proof}
\section{Exponential sums with digit restrictions}
This section establishes upper bounds for exponential sums involving the parameter $\gamma \in \mathbb{R}$. Our focus is on cases where $\gamma$ exhibits unfavourable behaviour—defined as situations where, for a given $m \in \N$, $\|\gamma x \|$ exceeds $\frac{1}{2b^m}$ for all $x$ in the set $\mathfrak{D}_{b}^{*} \cap [1, N]$. If $\gamma$ is unfavourable for large values of $N$ (with hidden $m$), our analysis reveals improvements over the trivial upper bound for these sums. In the next section, such bounds on the exponential sums will be used alongside with the Erdős-Turán inequality to prove that $\gamma$ is ``not unfavourable'' when $N \geq J_{b}b^{m/2}$, where $J_{b}$ is an effective real constant depending only on $b$.

For $r \in \N\cup \{0\},$ define
\begin{equation*}
\mathfrak{D}_{b}(r) := \left\{\sum_{k=0}^{r}\xi_{k}b^{k}: \xi_{k} \in \{0,1\}\right\},
\end{equation*}
so that $\mathfrak{D}_{b}(r)$ has $2^{r+1}$ elements. Also, let
\begin{equation*}
\mathfrak{D}_{b}^{*}(r) := \mathfrak{D}_{b}(r)\cup \{b^{d}-b^{c}: c,d \in \left\{0,\ldots,r\}, d>c\right\}.
\end{equation*}
\begin{lem}
\label{Prodbound}
For $k \in \Z$, $r \in \N \cup \{0\}$ and $\gamma \in \R$ we have the bound
\begin{equation*}
\left|\sum_{j \in \mathfrak{D}_{b}(r)}e(kj\gamma)\right| \leq 2^{r+1} \prod_{d=0}^{r}(1 - \pi \|k b^d \gamma\|^2).
\end{equation*}
\end{lem}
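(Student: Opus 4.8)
The plan is to exploit the product structure of $\mathfrak{D}_{b}(r)$. Writing each $j \in \mathfrak{D}_{b}(r)$ as $j = \sum_{d=0}^{r} \xi_d b^d$ with $\xi_d \in \{0,1\}$, the sum factors completely:
\begin{align*}
\sum_{j \in \mathfrak{D}_{b}(r)} e(k j \gamma) = \sum_{\xi_0 \in \{0,1\}} \cdots \sum_{\xi_r \in \{0,1\}} \prod_{d=0}^{r} e(k \xi_d b^d \gamma) = \prod_{d=0}^{r} \left(1 + e(k b^d \gamma)\right).
\end{align*}
So the modulus of the sum is $\prod_{d=0}^{r} |1 + e(k b^d \gamma)|$, and it remains to bound each factor $|1 + e(\theta)|$ in terms of $\|\theta\|$ where $\theta = k b^d \gamma$.

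The elementary identity is $|1 + e(\theta)| = 2|\cos(\pi \theta)| = 2|\cos(\pi \|\theta\|)|$, since $\|\theta\|$ differs from $\pm\theta$ by an integer and cosine is even and $2$-periodic. Thus I would reduce to showing $2|\cos(\pi t)| \le 2(1 - \pi t^2)$ for $t = \|\theta\| \in [0, 1/2]$, i.e.\ $\cos(\pi t) \le 1 - \pi t^2$. This is where a small amount of care is needed: the naive bound $\cos x \le 1 - x^2/2$ would give $1 - \pi^2 t^2/2$, and since $\pi^2/2 > \pi$ this is in fact \emph{stronger} than what is claimed, so the inequality $\cos(\pi t) \le 1 - \pi t^2$ follows for all $t \in \R$ (not merely $[0,1/2]$) from the standard estimate $\cos x \le 1 - x^2/2$, valid for all real $x$. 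One can prove $\cos x \le 1 - x^2/2$ by noting the function $g(x) = 1 - x^2/2 - \cos x$ satisfies $g(0) = g'(0) = 0$ and $g''(x) = \cos x - 1 \le 0$... wait, that gives the wrong direction; instead use $g(x) = \cos x - 1 + x^2/2$ with $g(0)=g'(0)=0$ and $g''(x) = 1 - \cos x \ge 0$, so $g$ is convex with a minimum at $0$, hence $g \ge 0$, which is exactly $\cos x \le 1 - x^2/2$... again the sign is off; the correct reading is $g(x) \ge 0 \iff \cos x \ge 1 - x^2/2$. So in fact $\cos x \ge 1 - x^2/2$, and we need an \emph{upper} bound on $\cos(\pi t)$. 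The honest route is: on $[0,1/2]$ one has $\cos(\pi t) \le 1 - \pi t^2$ because both sides agree to second order would be false; rather check endpoints and concavity — $h(t) = 1 - \pi t^2 - \cos(\pi t)$ has $h(0) = 0$, $h(1/2) = 1 - \pi/4 > 0$, and $h''(t) = -2\pi + \pi^2 \cos(\pi t)$, which changes sign, so I would instead verify $h(t) \ge 0$ on $[0,1/2]$ directly via $h'(0) = 0$ and $h'(t) = -2\pi t + \pi \sin(\pi t) \ge 0$ iff $\sin(\pi t) \ge 2t$, which holds on a subinterval only; the cleanest fix is to bound $|1+e(\theta)|^2 = 2 + 2\cos(2\pi\theta) = 4 - 4\sin^2(\pi\theta) \le 4 - 4\cdot(2\|\theta\|)^2 \cdot$ using $|\sin(\pi\theta)| \ge 2\|\theta\|$, giving $|1+e(\theta)| \le 2\sqrt{1 - 4\|\theta\|^2} \le 2(1 - 2\|\theta\|^2) \le 2(1 - \pi\|\theta\|^2/2)$, which is even stronger; but to land exactly on the stated constant $\pi$ one uses $\sqrt{1-u} \le 1 - u/2$ and keeps only what is needed.

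Concretely, the step I would actually execute: from $|\sin \pi x| \ge 2\|x\|$ (valid for all real $x$, with $\|x\|$ the distance to $\Z$), deduce
\begin{align*}
|1 + e(k b^d \gamma)|^2 = 4\cos^2(\pi k b^d \gamma) = 4\left(1 - \sin^2(\pi k b^d \gamma)\right) \le 4\left(1 - 4\|k b^d \gamma\|^2\right),
\end{align*}
hence $|1 + e(k b^d \gamma)| \le 2\sqrt{1 - 4\|k b^d\gamma\|^2} \le 2\left(1 - 2\|k b^d \gamma\|^2\right) \le 2\left(1 - \pi \|k b^d \gamma\|^2\right)$, the last step since $2 > \pi$ is false — so instead I keep $2(1 - 2\|kb^d\gamma\|^2)$ and note $2 > \pi$ is wrong, meaning I need the opposite, $\pi < 2$ is false too. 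Given $\pi \approx 3.14 > 2$, the claimed factor $(1-\pi\|k b^d\gamma\|^2)$ is \emph{weaker} near $0$ than $(1 - 2\|k b^d\gamma\|^2)$ only if... $\pi > 2$ so $1 - \pi u < 1 - 2u$, i.e.\ the claimed bound is the stronger one, so $2(1-2u^2)$ does \emph{not} imply it. The main obstacle is therefore pinning down the exact trigonometric inequality $2|\cos(\pi t)| \le 2(1 - \pi t^2)$ for $t \in [0,1/2]$: I would establish it by writing $\cos(\pi t) = 1 - 2\sin^2(\pi t/2) \le 1 - 2(t)^2 \cdot$ — using $\sin(\pi t/2) \ge t$ on $[0,1]$ (concavity of sine, chord from $0$ to $1$) — giving $\cos(\pi t) \le 1 - 2t^2$, and since $2 \ge \pi$ is false this still does not suffice, confirming that the delicate point is real. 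I would resolve it by the sharper concavity estimate $\sin(\pi t/2) \ge \sin(\pi t/2)$ valid comparison $\sin x \ge x - x^3/6$ giving a cleaner polynomial bound, then verifying $1 - 2(t - (\pi t/2)^3/6)^2 \le 1 - \pi t^2$ on $[0,1/2]$ by an explicit (if tedious) polynomial inequality; that verification, together with the clean factorization in the first paragraph and the even/periodic reduction $|1+e(\theta)| = 2|\cos(\pi\|\theta\|)|$, completes the proof.
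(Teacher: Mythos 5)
Your opening step --- factoring the sum over $\mathfrak{D}_{b}(r)$ as $\prod_{d=0}^{r}\bigl(1+e(kb^{d}\gamma)\bigr)$, noting $|1+e(\theta)|=2|\cos(\pi\theta)|$, and reducing to the single inequality $|\cos(\pi t)|\le 1-\pi t^{2}$ for $t=\|kb^{d}\gamma\|\in[0,\tfrac12]$ --- is exactly the paper's first step and is correct. The problem is that you never actually establish that inequality. You attempt three routes and (correctly) recognize that the first two fail: the Taylor-type bound goes the wrong way, and the estimate $|1+e(\theta)|\le 2\sqrt{1-4\|\theta\|^{2}}\le 2(1-2\|\theta\|^{2})$ is weaker than the claimed $2(1-\pi\|\theta\|^{2})$ because $\pi>2$. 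Your third route, via $\sin x\ge x-x^{3}/6$, is only gestured at: the concluding polynomial inequality is stated with a typo ($t$ where $\pi t/2$ is meant) and left unverified. So as written the proof has a genuine gap at its one nontrivial analytic point.

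What makes this frustrating is that you had the correct argument in hand and discarded it on the strength of a false claim. You set $h(t)=1-\pi t^{2}-\cos(\pi t)$, computed $h(0)=0$ and $h'(t)=\pi\sin(\pi t)-2\pi t$, observed that $h'(t)\ge 0$ iff $\sin(\pi t)\ge 2t$, and then asserted that this ``holds on a subinterval only.'' In fact $\sin(\pi t)\ge 2t$ holds on all of $[0,\tfrac12]$: this is Jordan's inequality $\sin x\ge\frac{2}{\pi}x$ on $[0,\tfrac{\pi}{2}]$ (concavity of sine puts it above the chord from $(0,0)$ to $(\tfrac{\pi}{2},1)$). Hence $h$ is nondecreasing on $[0,\tfrac12]$, so $h\ge h(0)=0$, which is exactly $\cos(\pi t)\le 1-\pi t^{2}$; since also $\cos(\pi t)\ge0$ there, and by evenness and periodicity, one gets $|\cos(\pi x)|\le 1-\pi\|x\|^{2}$ for all real $x$. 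This is precisely the lemma the paper proves in its appendix (there phrased as $\sin(y)/y\ge 2/\pi$, then integrated). Your fallback via $\sin x\ge x-x^{3}/6$ can also be pushed through --- the required inequality reduces to $\bigl(1-(\pi t/2)^{2}/6\bigr)^{2}\ge 2/\pi$ on $[0,\tfrac12]$, which holds since the left side is at least $(1-\pi^{2}/96)^{2}>0.8>2/\pi$ --- but that verification must actually be carried out for the proof to stand.
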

\begin{proof}
We can write
\begin{align*}
\left|\sum_{j \in \mathfrak{D}_{b}(r)}e(kj\gamma)\right| &= \left|\prod_{d=0}^{r}(1 + e(kb^d \gamma))\right| \\
&= \left| \prod_{d=0}^{r}(e(kb^d \gamma/2)+e(-kb^d \gamma/2))\right|\\
&= \left| \prod_{d=0}^{r}2\cos(k b^d \gamma)\right|\\
&= 2^{r+1}\left| \prod_{d=0}^{r}\cos(k b^d \gamma)\right|.
\end{align*}
By Lemma \ref{Cosbound}, the quantity above has the upper bound of
\begin{equation*}
2^{r+1} \prod_{d=0}^{r}(1 - \pi \|k b^d \gamma\|^2).
\end{equation*}
\par \vspace{-\baselineskip} \qedhere
\end{proof}
\begin{lem}
\label{Addcomb}
Given $t$ integers $a_{1},\ldots,a_{t}$ distinct modulo $k$ for some $k \geq 1$, if $t \geq 3\sqrt{k}$ there exists some $h \geq 1$ so that there exists $h$ distinct elements from $\{a_{1},\ldots,a_{t}\}$ that sum to a number that is a multiple of $k$.
\end{lem}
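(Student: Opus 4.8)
The plan is to show that a set of distinct nonzero residues modulo $k$ with no nonempty zero‑sum subset — a \emph{zero‑sum free} set — must have size $<\sqrt{2k}$, which is $<3\sqrt k$. The reduction to this is routine: replacing each $a_i$ by its residue in $\{0,\dots,k-1\}$, if some $a_i\equiv 0\pmod k$ we are done with $h=1$; the $a_i$ being distinct modulo $k$ forces $t\le k$, so the hypothesis $t\ge 3\sqrt k$ is vacuous or trivially settled for small $k$ and we may assume $k$ is large; and if two partial sums $a_1+\dots+a_i$ and $a_1+\dots+a_j$ coincide modulo $k$, their difference is the desired zero‑sum subset. Hence it suffices to prove that if $A=\{a_1,\dots,a_t\}$ is a set of $t$ distinct nonzero residues and is zero‑sum free, then $t<\sqrt{2k}$.

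The tool is a lower bound for the number of subset sums. Let $\Sigma(A)=\bigl\{\textstyle\sum_{a\in S}a\bmod k:\ S\subseteq A\bigr\}\subseteq\Z/k\Z$, where $S$ ranges over all subsets (so $0\in\Sigma(A)$ via $S=\varnothing$). The crucial inequality is
\[
|\Sigma(A)|\ \ge\ \binom{t+1}{2}+1
\]
for zero‑sum free $A$ with $|A|=t$. Since $|\Sigma(A)|\le k$, this gives $\binom{t+1}{2}+1\le k$, hence $t^2<t(t+1)\le 2(k-1)<2k$ and $t<\sqrt{2k}<3\sqrt k$, as required (so the constant $3$ is comfortably non‑optimal).

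To prove the subset‑sum inequality, write the elements of $A$ as integers $1\le r_1<r_2<\dots<r_t\le k-1$ and put $S_0=0$, $S_j=r_1+\dots+r_j$. When $S_t<k$ the inequality is elementary and sharp: the prefix sums $S_0,\dots,S_t$ together with the single‑swap sums $S_{j-1}-r_\ell+r_j$ for $1\le\ell<j\le t$ form $\binom{t+1}{2}+1$ pairwise distinct integers in $[0,k-1]$ — the $j-1$ swaps at level $j$ all lie strictly between $S_{j-1}$ and $S_j$ — hence represent that many distinct residues, each visibly a subset sum. The genuinely harder case is when the subset sums wrap around $\Z/k\Z$ (for instance $A=\{k-2,k-1\}$). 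Here I would argue by induction on the modulus: view $\Sigma(A)$ as the sumset $\{0,a_1\}+\dots+\{0,a_t\}$ and apply Kneser's theorem; a nontrivial stabilizer $H$ of $\Sigma(A)$ makes it a union of $H$‑cosets, the elements of $A$ inside $H$ form a zero‑sum free subset of the smaller group $H$ (to which the induction applies), and each remaining element of $A$ contributes a complete new coset, while a trivial stabilizer is handled by combining this coset‑counting with the bound already available for proper subgroups.

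I expect this wrap‑around analysis to be the main obstacle: it is exactly where naive pigeonholing on subset sums fails and one genuinely needs an additive‑combinatorial input (Kneser's theorem, or the structure theory of zero‑sum free sequences). In fact the inequality $|\Sigma(A)|\ge\binom{t+1}{2}+1$ is classical — it is equivalent to the sharp bound on the maximal length of a squarefree zero‑sum free sequence over $\Z/k\Z$ — so if a short self‑contained treatment of the wrap‑around case proves awkward, one may simply invoke it from the literature. The wide margin between $\sqrt 2$ and $3$ means that any non‑sharp version of the bound coming out of the induction will still suffice.
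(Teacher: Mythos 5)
Your overall strategy --- bound the number of distinct subset sums of a zero-sum free set from below and compare with $k$ --- is morally the right one (it is how Olson's theorem is proved; the paper itself disposes of this lemma by citing Olson's Corollary 3.2.1 applied to $\Z/k\Z$, which \emph{is} the statement). But the specific inequality you hang everything on, namely $|\Sigma(A)|\ge\binom{t+1}{2}+1$ for a zero-sum free set of $t$ distinct nonzero residues modulo $k$, is false for composite $k$. Take $k=6$ and $A=\{1,3,4\}$: the nonempty subset sums are $1,3,4,\,1{+}3{=}4,\,1{+}4{=}5,\,3{+}4\equiv 1,\,1{+}3{+}4\equiv 2$, none of which is $0\bmod 6$, so $A$ is zero-sum free with $t=3$, yet $|\Sigma(A)|=6<7=\binom{4}{2}+1$. (This instance is harmless for the lemma since $3\sqrt6>6$, but it kills your deduction ``$|\Sigma(A)|\le k$ hence $\binom{t+1}{2}+1\le k$''.) The sharp statement you call classical is Balandraud's theorem; it is restricted to prime modulus, dates from the 2010s, and is proved by the polynomial method --- it is neither classical nor available for general $k$, and no Kneser-style induction can recover it because the statement being inducted towards is simply not true for composite moduli.

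Independently of this, your written argument only covers the non-wrap-around case $S_t<k$, where zero-sum freeness is not even used; the wrap-around case, which is the entire content of the lemma, is a sketch ending in ``invoke it from the literature''. At the level of generality required (all $k\ge 1$, any constant up to $3$), the relevant literature result is precisely Olson's theorem, i.e.\ the lemma itself, so the fallback is circular. To make this a proof you should either cite Olson directly, as the paper does, or establish a weaker but \emph{true} subset-sum bound of the shape $|\Sigma(A)|\ge ct^2$ with $c\ge 1/9$, valid for every finite cyclic (indeed abelian) group --- which is exactly what Olson's argument supplies and what produces the constant $3=1/\sqrt{c}$ in the statement.
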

\begin{proof}
We can apply \cite[Corollary 3.2.1]{Olsen} to the additive group $\Z/ k\Z$.
\end{proof}
\begin{lem}
\label{Addcomb2}
If for a fixed $\beta > 0$, $r \in \mathbb{N}\cup \{0\}$ and $\gamma \in \R$, one has $\|\gamma x\| > \beta$ for all $x \in \mathfrak{D}_{b}^{*}(r)$ with $x \neq 0$, then for each $k \in \N$ there are  $g < 3\sqrt{k}$ distinct values $d_{1}<\ldots<d_{g} \in \{0,\ldots,r\},$ for which $\|k b^{d_{j}}\gamma\| \leq \beta$.
\end{lem}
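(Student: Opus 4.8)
The plan is to argue by contradiction. Fix $k \in \N$ and set
$$G := \{d \in \{0,\ldots,r\} : \|k b^d \gamma\| \le \beta\}, \qquad g := \#G,$$
and suppose $g \ge 3\sqrt{k}$; the goal is to derive a contradiction, which gives $g < 3\sqrt{k}$ and hence the statement with $\{d_1 < \cdots < d_g\} = G$. The case $k = 1$ is immediate, since $b^d \in \mathfrak{D}_b^*(r)\setminus\{0\}$ forces $\|b^d\gamma\| > \beta$, so $G = \emptyset$ and $g = 0 < 3$; thus assume $k \ge 2$. For each $d \in G$ pick $n_d \in \Z$ with $|k b^d \gamma - n_d| \le \beta$ and write $n_d = k q_d + m_d$ with $q_d \in \Z$ and $m_d \in \{0,\ldots,k-1\}$, so that $|b^d \gamma - q_d - m_d/k| \le \beta/k$.

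First I would show that the residues $(m_d)_{d \in G}$ are pairwise distinct. If $m_d = m_c$ for some $d > c$ in $G$, then $(b^d - b^c)\gamma = (b^d\gamma - q_d - m_d/k) - (b^c\gamma - q_c - m_c/k) + (q_d - q_c)$ shows $\|(b^d - b^c)\gamma\| \le 2\beta/k \le \beta$ (this is where $k \ge 2$ enters), contradicting the hypothesis since $b^d - b^c \in \mathfrak{D}_b^*(r)\setminus\{0\}$. In particular the $m_d$ are $g$ distinct residues modulo $k$, so $g \le k$.

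Next I would apply Lemma~\ref{Addcomb} to the $g \ge 3\sqrt{k}$ integers $(m_d)_{d \in G}$, which are distinct modulo $k$: it produces a nonempty subset $S \subseteq G$ with $\sum_{d \in S} m_d \equiv 0 \pmod{k}$ (and since the $m_d$ are already distinct there is no ambiguity in ``distinct elements''). Put $x := \sum_{d \in S} b^d$. Because the exponents in $S$ are distinct elements of $\{0,\ldots,r\}$ and $S \neq \emptyset$, we have $x \in \mathfrak{D}_b(r) \subseteq \mathfrak{D}_b^*(r)$ and $x \neq 0$. Summing $|b^d\gamma - q_d - m_d/k| \le \beta/k$ over $d \in S$ and using $\tfrac1k\sum_{d\in S} m_d \in \Z$ gives $\|x\gamma\| \le (\#S)\beta/k \le g\beta/k \le \beta$, contradicting $\|x\gamma\| > \beta$. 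This completes the argument.

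The only delicate point is the bookkeeping with the factor $k$: one must track it through both approximation steps, so that $2\beta/k \le \beta$ and $(\#S)\beta/k \le \beta$, the latter relying on $\#S \le g \le k$, which in turn is exactly the consequence of the ``distinct residues'' claim. Beyond that I expect no real obstacle, since the combinatorial heart of the matter is already packaged in Lemma~\ref{Addcomb}; one just has to be careful that the subset $S$ returned by it consists of genuinely distinct exponents so that $x$ indeed lies in $\mathfrak{D}_b(r)$.
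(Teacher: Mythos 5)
Your proof is correct, and it follows the same overall strategy as the paper: extract nearest integers for the $d$ with $\|kb^d\gamma\|\le\beta$, show their residues mod $k$ are pairwise distinct (else some $b^{d_v}-b^{d_u}\in\mathfrak{D}_b^*(r)$ would violate the hypothesis), and then, assuming $g\ge 3\sqrt{k}$, invoke Lemma~\ref{Addcomb} to produce a zero-sum subset whose associated element $\sum_{d\in S}b^d\in\mathfrak{D}_b(r)$ yields a contradiction. Where you genuinely improve on the paper is the endgame. The paper must ensure that the size $h$ of the subset returned by Lemma~\ref{Addcomb} satisfies $h\beta/k\le\beta$; it does so by applying the lemma to only $\lceil 3\sqrt{k}\rceil$ of the elements, so that $h\le\lceil 3\sqrt{k}\rceil\le k$, which forces the restriction $k\ge 36$, and it then needs a separate pigeonhole argument (showing $g\le k/2$, via pairs $s_{u_1}+s_{u_2}\equiv 0\bmod k$) to dispose of $1\le k\le 35$. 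Your observation that $\#S\le g\le k$ follows for free from the distinctness of the residues makes both the case split and the auxiliary $g\le k/2$ argument unnecessary, and the single uniform bound $\#S\,\beta/k\le g\beta/k\le\beta$ closes the proof for all $k\ge 2$. This is a cleaner route to the same conclusion.
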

\begin{proof}
The assertion is trivially true for $k = 1$; assume $k \geq 2$. For the integers $d_{1},\ldots,d_{g}$ we can extract the integers $s_{1},\ldots,s_{g}$ so that $\|kb^{d_{j}}\gamma\| = |kb^{d_{j}}\gamma -s_{j}|$. Observe that if for $1 \leq u < v \leq g$ and $s_{u} \equiv s_{v} \mod k$ we will have that 
\begin{align*}
&|(k (b^{d_{v}}-b^{d_{u}})\gamma - s_{v}+s_{u}| \leq |kb^{d_{v}}\gamma -s_{v}|+|-(kb^{d_{u}}\gamma - s_{u})| \leq 2\beta.
\end{align*}
Hence,
\begin{align*}
|(b^{d_{v}}-b^{d_{u}})\gamma + \frac{-s_{v}+s_{u}}{k}| \leq \frac{2 \beta}{k} \leq \beta.
\end{align*}
Thus,
\begin{align*}
\|(b^{d_{v}}-b^{d_{u}})\gamma\| \leq \beta,
\end{align*}
which contradicts our assumption. Therefore the set $\{s_{1},\ldots,s_{g}\}$ holds $t$ distinct congruences modulo $k$.

Now observe that if $g > \frac{k}{2}$ there exists $s_{u_{1}},s_{u_{2}} \in \{s_{1},\ldots,s_{g}\}$ satisfying $s_{u_{1}}+s_{u_{2}} \equiv 0 \mod k$ and $u_{1}\neq u_{2}$. With these constraints, one would obtain that
\begin{align*}
|k(b^{d_{u_{1}}}+b^{d_{u_{2}}})\gamma-s_{u_{1}}-s_{u_{2}}| \leq |k b^{d_{u_{1}}}\gamma-s_{u_{1}}|+|k b^{d_{u_{2}}}\gamma-s_{u_{2}}| \leq 2\beta.
\end{align*}
Hence,
\begin{align*}
|(b^{d_{u_{1}}}+b^{d_{u_{2}}})\gamma - \frac{s_{u_{1}}+s_{u_{2}}}{k}| \leq \frac{2\beta}{k} \leq \beta.
\end{align*}
Thus,
\begin{align*}
\|((b^{d_{u_{1}}}+b^{d_{u_{2}}})\gamma)\| \leq \beta,
\end{align*}
which contradicts our assumption. Therefore the inequality $g \leq \frac{k}{2}$ holds, implying $g < 3\sqrt{k}$ for $1 \leq k \leq 35$. For the remainder of the argument, assume $k \geq 36$.

Note that if $g \geq 3\sqrt{k}$, by Lemma \ref{Addcomb} there exists $h \in \N$ with
\begin{align*}
1 \leq h \leq \lceil 3 \sqrt{k}\rceil,
\end{align*}
and an increasing sequence of integers satisfying 
\begin{align*}
1 \leq u_{1} < \ldots < u_{h} \leq \lceil 3 \sqrt{k}\rceil,
\end{align*}
so that $s_{u_{1}}+\ldots+s_{u_{h}} \equiv 0 \mod k$. Hence, we can write
\begin{align*}
|k(b^{d_{u_{1}}}+\ldots+b^{d_{u_{h}}})\gamma - s_{u_{1}}-\ldots-s_{u_{h}}| \leq \sum_{j=1}^{h}|k(b^{d_{u_{j}}})\gamma-s_{u_{j}}| \leq h \beta.
\end{align*}
Thus (with $k \geq 36$),
\begin{align*}
|(b^{d_{u_{1}}}+\ldots+b^{d_{u_{h}}})\gamma- \frac{s_{u_{1}}+\ldots+s_{u_{h}}}{k}| \leq \frac{h \beta}{k} \leq \frac{\lceil 3 \sqrt{k}\rceil \beta}{k} \leq \beta.
\end{align*}
Therefore
\begin{align*}
\|(b^{d_{u_{1}}}+\ldots+b^{d_{u_{h}}})\gamma\| \leq \beta.
\end{align*}
Thus, the inequality $g < 3\sqrt{k}$ must hold for $k \geq 36$ and $1 \leq k \leq 35$.
\end{proof}
For $t \geq 1$, we define the sets
\begin{align*}
G_{b}(t) := \left\{y: y \in \R, \frac{1}{2b^t} < \|y\| \leq \frac{1}{2b^{t-1}}\right\}.
\end{align*}
\begin{lem}
\label{IntegerDistanceLemma}
If $y \in G_{b}(t)$ for some $t \geq 2$, then $by \in G_{b}(t-1)$.
\end{lem}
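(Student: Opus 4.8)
The plan is to reduce the statement to the elementary fact that multiplication by $b$ scales the distance-to-nearest-integer function \emph{exactly}, provided the point is close enough to an integer. First I would pick the integer $n$ nearest to $y$ and write $y = n + \delta$, where $\delta = y - n$ satisfies $|\delta| = \|y\|$. Since $t \ge 2$ and $y \in G_{b}(t)$, we have $|\delta| = \|y\| \le \frac{1}{2b^{t-1}} \le \frac{1}{2b}$; this is the one place where the hypothesis $t \ge 2$ gets used.

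Next I would note that $by = bn + b\delta$ with $bn \in \Z$ and $|b\delta| = b|\delta| \le \tfrac{1}{2}$, so the nearest integer to $by$ is $bn$ and hence $\|by\| = |b\delta| = b|\delta| = b\|y\|$. (The boundary case $|b\delta| = \tfrac12$ is harmless, since then $\|by\| = \tfrac12 = b\|y\|$ as well.)

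Finally I would multiply the two defining inequalities $\frac{1}{2b^{t}} < \|y\| \le \frac{1}{2b^{t-1}}$ through by $b$ to obtain $\frac{1}{2b^{t-1}} < b\|y\| \le \frac{1}{2b^{t-2}}$, that is $\frac{1}{2b^{t-1}} < \|by\| \le \frac{1}{2b^{(t-1)-1}}$, which is exactly the assertion $by \in G_{b}(t-1)$.

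There is essentially no serious obstacle: the proof is a one-line computation once the identity $\|by\| = b\|y\|$ is in place. The only point deserving a moment's attention is that this identity genuinely requires $\|y\| \le \frac{1}{2b}$, since otherwise multiplication by $b$ could push $y$ past a half-integer and thereby \emph{decrease} rather than scale the distance to the nearest integer — and it is precisely the constraint $t \ge 2$ built into $G_{b}(t)$ that rules this out.
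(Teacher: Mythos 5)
Your proposal is correct and follows essentially the same route as the paper: both arguments take the integer realizing $\|y\|$, multiply the defining inequalities through by $b$, and use $t\ge 2$ to ensure the resulting distance is at most $\tfrac12$ so that it genuinely equals $\|by\|$. No gaps.
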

\begin{proof}
Assume $y \in G_{b}(t)$ for some $t \geq 2$. This assumption implies the existence of an integer $c$ such that
\begin{align*}
\frac{1}{2b^{t}} < |y-c| \leq \frac{1}{2b^{t-1}}.
\end{align*}
Therefore, we can deduce
\begin{align*}
\frac{1}{2b^{t-1}} < |by-bc| \leq \frac{1}{2b^{t-2}}.
\end{align*}
Since $b \geq 2$ and $t \geq 2$, it follows that $\frac{1}{2b^{t-2}} \leq \frac{1}{2}$, which implies $\|by\| = |by-bc|$. Consequently, we obtain the inequality
\begin{align*}
\frac{1}{2b^{t-1}} < \|by\| \leq \frac{1}{2b^{t-2}}.
\end{align*}
\par \vspace{-\baselineskip} \qedhere
\end{proof}
\begin{lem}
\label{main-1}
Let $m$ be a natural number, $r \in \N \cup \{0\}$, $k \in \N$, and $\gamma \in \R$ such that $\|\gamma x\| > \frac{1}{2b^m}$ for all $x \in \mathfrak{D}_{b}^{*}(r)$, $x \neq 0$. Then we have the following bound:
\begin{align*}
 \left|\sum_{j \in \mathfrak{D}_{b}(r), \ j \neq 0} e(kj\gamma)\right| \leq 2^{r+3}\left(1 - \frac{\pi}{4b^2}\right)^{\frac{r-3\sqrt{k}+1}{m}}.
\end{align*}
\end{lem}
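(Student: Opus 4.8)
The plan is to feed the product bound of Lemma~\ref{Prodbound} into a combinatorial count showing that, under the unfavourability hypothesis, many exponents $d\in\{0,\dots,r\}$ satisfy $\|kb^{d}\gamma\|>\tfrac1{2b}$; each such $d$ pushes the $d$-th factor of that product below $1-\tfrac{\pi}{4b^{2}}$.

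\emph{Reduction.} First I would split off $j=0$ and apply Lemma~\ref{Prodbound}:
\begin{align*}
\Bigl|\sum_{j\in\mathfrak{D}_{b}(r),\,j\neq0}e(kj\gamma)\Bigr|
\leq\Bigl|\sum_{j\in\mathfrak{D}_{b}(r)}e(kj\gamma)\Bigr|+1
\leq2^{r+1}\prod_{d=0}^{r}\bigl(1-\pi\|kb^{d}\gamma\|^{2}\bigr)+1 .
\end{align*}
Every factor lies in $(0,1]$, so discarding a subset of them only enlarges the product. Put $E=\tfrac{r-3\sqrt{k}+1}{m}$; if $E<0$ the claim is trivial, since the left side is at most $2^{r+1}-1<2^{r+3}<2^{r+3}(1-\tfrac{\pi}{4b^{2}})^{E}$, so I may assume $E\ge0$. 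It then suffices to show that the number $N_{1}$ of exponents $d\in\{0,\dots,r\}$ with $\|kb^{d}\gamma\|>\tfrac1{2b}$ exceeds $E-1$: since each such factor is below $1-\tfrac{\pi}{4b^{2}}$ while the others are at most $1$,
\begin{align*}
\prod_{d=0}^{r}\bigl(1-\pi\|kb^{d}\gamma\|^{2}\bigr)\leq\bigl(1-\tfrac{\pi}{4b^{2}}\bigr)^{N_{1}}\leq\bigl(1-\tfrac{\pi}{4b^{2}}\bigr)^{E-1},
\end{align*}
and since $b\ge2$ forces $1-\tfrac{\pi}{4b^{2}}>\tfrac12$, one has $(1-\tfrac{\pi}{4b^{2}})^{-1}<2$ and, using $0\le E\le r+1$, also $2^{r+2}(1-\tfrac{\pi}{4b^{2}})^{E}\ge2^{r+2}\cdot2^{-(r+1)}=2\ge1$; hence $2^{r+1}(1-\tfrac{\pi}{4b^{2}})^{E-1}+1<2^{r+2}(1-\tfrac{\pi}{4b^{2}})^{E}+1\le2^{r+3}(1-\tfrac{\pi}{4b^{2}})^{E}$.

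\emph{The count.} I would apply Lemma~\ref{Addcomb2} with $\beta=\tfrac1{2b^{m}}$: the bad set $\mathcal{B}=\{d\in\{0,\dots,r\}:\|kb^{d}\gamma\|\le\tfrac1{2b^{m}}\}$ has $g:=\#\mathcal{B}<3\sqrt{k}$. For $d\notin\mathcal{B}$, the number $kb^{d}\gamma$ lies in exactly one of the disjoint sets $G_{b}(1),\dots,G_{b}(m)$; call the index $\tau_{d}$, so $\tau_{d}=1$ precisely when $\|kb^{d}\gamma\|>\tfrac1{2b}$. Lemma~\ref{IntegerDistanceLemma} gives that if $\tau_{d}\ge2$ then $d+1\notin\mathcal{B}$ and $\tau_{d+1}=\tau_{d}-1$; iterating, every ``middle'' index $d$ (one with $\tau_{d}\in\{2,\dots,m\}$) heads a rigid descent $\tau_{d},\tau_{d}-1,\dots,1$ along the positions $d,d+1,\dots,d+\tau_{d}-1$ so long as these lie in $[0,r]$. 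Then I would partition $\{0,\dots,r\}$ into the $1$-positions ($N_{1}$ of them), the bad positions $\mathcal{B}$ ($g$ of them), and the set $M$ of middle positions, and assign to each middle $d$ with $d+\tau_{d}-1\le r$ the $1$-position $d+\tau_{d}-1$ that ends its descent: a fixed $1$-position $p$ receives only middle positions from $\{p-m+1,\dots,p-1\}$, hence at most $m-1$ of them, while any middle $d$ with $d+\tau_{d}-1>r$ satisfies $d>r-m+1$, so there are at most $m-1$ of those. Thus $\#M\le(m-1)N_{1}+(m-1)$, and therefore
\begin{align*}
r+1=N_{1}+g+\#M\leq mN_{1}+g+(m-1),
\end{align*}
whence $N_{1}\ge\tfrac{r+2-m-g}{m}>\tfrac{r+1-3\sqrt{k}}{m}-1=E-1$, as required.

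\emph{The crux.} The one genuinely delicate point is the final counting inequality. The naive argument — tile $\{0,\dots,r\}$ by blocks of length $m$ and observe that each block missing $\mathcal{B}$ must contain a $1$-position by the descent dynamics — charges every bad index a full unit of exponent, losing $g\approx3\sqrt{k}$, which is fatal for $m\ge2$ since only $\tfrac{3\sqrt{k}}{m}$ is affordable. Counting the middle \emph{and} bad positions against the $1$-positions through the rigidity of Lemma~\ref{IntegerDistanceLemma} charges each bad index only $1/m$; the residual boundary term $(m-1)$ becomes the harmless shift from $E$ to $E-1$, which is absorbed by the slack between $2^{r+1}$ and $2^{r+3}$.
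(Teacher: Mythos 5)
Your proof is correct and follows essentially the same route as the paper's: Lemma~\ref{Prodbound} for the product bound, Lemma~\ref{Addcomb2} with $\beta=\frac{1}{2b^m}$ to bound the exceptional exponents by $3\sqrt{k}$, and the descent dynamics of Lemma~\ref{IntegerDistanceLemma} to produce at least roughly $(r+1-3\sqrt{k})/m$ exponents $d$ with $kb^{d}\gamma\in G_{b}(1)$. The only difference is organizational: the paper extracts one $G_{b}(1)$-position from each block of $m$ consecutive elements of the good set, whereas you charge each ``middle'' position to the $G_{b}(1)$-position terminating its descent; both yield the same count $N_{1}\geq E-1$ and the same final constant.
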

\begin{proof}
First, using Lemma \ref{Prodbound}, we obtain the bound:
\begin{align*}
\left|\sum_{j \in \mathfrak{D}_{b}(r)} e(kj\gamma)\right| \leq 2^{r+1} \prod_{d=0}^{r} \left(1 - \pi \|k b^d \gamma\|^2\right).
\end{align*}
Let
\begin{align*}
V_{\gamma, k, r}(m) = \left\{d : d \in \{0,\ldots,r\}, \|k b^d \gamma\| > \frac{1}{2b^m}\right\}.
\end{align*}
Using the product inequality above, we have the estimate:
\begin{align*}
\left|\sum_{j \in \mathfrak{D}_{b}(r)} e(kj\gamma)\right| \leq 2^{r+1} \prod_{d \in V_{\gamma, k, r}(m)} \left(1 - \pi \|k b^d \gamma\|^2\right).
\end{align*}
For each element $v$ in $V_{\gamma,k,r}(m)$, we have $k b^{v}\gamma \in G_{b}(h)$ for some $h \in \{1,\ldots,m\}$. Using Lemma \ref{IntegerDistanceLemma} and induction, we can show that $kb^{v+h-1}\gamma \in G_{b}(1)$. Let $0 \leq a_{1} < \ldots < a_{\#V_{\gamma,k,r}(m)} \leq r$ be all the elements of $V_{\gamma,k,r}(m)$ arranged in increasing order.

If $\#V_{\gamma,k,r}(m) \geq m$, then we can consider inequalities
\begin{align*}
a_{mt} < \ldots < a_{mt+m-1},
\end{align*}
for $t = 0,\ldots,\lfloor\frac{\# V_{\gamma,k,r}(m)}{m}\rfloor-1$. For each such $t$, there is an $h_{t} \in \{1,\ldots,m\}$ so that $kb^{a_{mt}+h_{t}-1}\gamma \in G_{b}(1)$, and thus $a_{mt}+h_{t}-1 \in V_{\gamma,k,r}(m)$. We have 
\begin{align*}
a_{mt} \leq a_{mt}+h_{t}-1 \leq a_{mt+m-1},
\end{align*}
and hence $a_{mt}+h_{t}-1 \in \{a_{mt},\ldots,a_{mt+m-1}\}$ as $a_{mt}+h_{t}-1 \in V_{\gamma,k,r}(m)$. Therefore, for $t = 0,\ldots,\lfloor\frac{\# V_{\gamma,k,r}(m)}{m}\rfloor-1$, the sets $\{a_{mt},\ldots,a_{mt+m-1}\}$ have an element $d_{t}$ for which $kb^{d_{t}} \gamma \in G_{b}(1)$. 

Consequently, there are at least $\lfloor\frac{\# V_{\gamma,k,r}(m)}{m}\rfloor$ elements $d$ in $V_{\gamma,k,r}(m)$ that satisfy $kb^d \gamma \in G_{b}(1)$. We can now establish an upper bound on:
\begin{align*}
\left|\sum_{j \in \mathfrak{D}_{b}(r)} e(kj\gamma)\right| &\leq 2^{r+1} \prod_{d \in V_{\gamma, k, r}(m)} \left(1 - \pi \|k b^d \gamma\|^2\right)\\
&\leq 2^{r+1} \prod_{d \in V_{\gamma, k, r}(m), \ kb^{d}\gamma \, \in \, G_{b}(1)} \left(1 - \pi \|k b^d \gamma\|^2\right)\\
&\leq 2^{r+1} \prod_{d \in V_{\gamma, k, r}(m), \ kb^{d}\gamma \, \in \, G_{b}(1)} \left(1 - \pi \left(\frac{1}{2b}\right)^2\right)\\
&\leq 2^{r+1} \left(1 - \frac{\pi}{4b^2}\right)^{\lfloor\frac{\# V_{\gamma,k,r}(m)}{m}\rfloor}.
\end{align*}
By inspection, we can verify that the above inequality holds even if $\# V_{\gamma,k,r}(m) < m$. Using Lemma \ref{Addcomb2}, we observe that 
\begin{align*}
\# V_{\gamma,k,r}(m) \geq r+1-(3\sqrt{k}) = r-3\sqrt{k}+1.
\end{align*}
Therefore, it follows that
\begin{align*}
&\left|\sum_{j \in \mathfrak{D}_{b}(r)} e(kj\gamma)\right| \leq 2^{r+1} \left(1 - \frac{\pi}{4b^2}\right)^{\lfloor\frac{r-3\sqrt{k}+1}{m}\rfloor}.
\end{align*}
In particular, we have:
\begin{align*}
&\left|\sum_{j \in \mathfrak{D}_{b}(r), \ j \neq 0} e(kj\gamma)\right| \leq  2^{r+3} \left(1 - \frac{\pi}{4b^2}\right)^{\frac{r-3\sqrt{k}+1}{m}}.
\end{align*}
\end{proof}
\section{Proof of Theorem \ref{main1}}

To make use of our bounds on exponential sums, we employ the Erd{\"o}s-Tur{\'a}n inequality, but before that, we will define the notion of discrepancy modified from \cite[p. 120]{Harman}.
\begin{definition}
Let $(x_{n})_{n=1}^{T}$ be a sequence of real numbers, we define
\begin{align*}
\mathcal{L}((x_{n})_{n=1}^{T}) = \sup_{\mathcal{I} \subset [0,1)}\left|\left(\sum_{n=1, \ \{x_{n}\}\in \mathcal{I}}^{T} 1\right) - T\lambda(\mathcal{I})\right|,
\end{align*}
where $\mathcal{I}$ denotes an interval (open, closed, or half-open) and $\lambda$ denotes the Lebesgue measure.
\end{definition}
With the above definition, we can state the Erd{\"o}s-Tur{\'a}n Theorem, here we modify \cite[Theorem 5.5]{Harman} (see also \cite{Erdos} for its historical origins).
\begin{lem}
\label{Discrepancy}
Let $(x_{n})_{n=1}^{T}$ be a sequence of reals, and let $G$ be a positive integer. With $C = (2 +\frac{2}{\pi})$ we have the inequality
\begin{align*}
\mathcal{L}((x_{n})_{n=1}^{T}) \leq \frac{T}{G+1}+C\sum_{k=1}^{G}\frac{1}{k}\left|\sum_{n=1}^{T}e(kx_{n})\right|.
\end{align*}
\end{lem}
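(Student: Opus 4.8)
The statement is a form of the classical Erd{\"o}s--Tur{\'a}n inequality, and the plan is to prove it by the standard Selberg-majorant argument, which both establishes the bound and produces exactly the constant $C=2+\tfrac{2}{\pi}$; alternatively one could simply invoke \cite[Theorem 5.5]{Harman} and absorb its constant into $2+\tfrac{2}{\pi}$. The one preliminary step I would carry out first is a reduction to sequences in $[0,1)$: since every interval appearing in the supremum lies in $[0,1)$ and $e(kx_n)=e(k\{x_n\})$ for all $k\in\Z$, replacing each $x_n$ by $\{x_n\}$ changes neither $\mathcal{L}((x_n)_{n=1}^{T})$ nor any exponential sum $\sum_{n}e(kx_n)$; and once $x_1,\dots,x_T\in[0,1)$, the supremum is insensitive to whether $\mathcal{I}$ is open, closed or half-open (nudging an endpoint past none, some, or all of the finitely many $x_n$ lying there, at negligible cost in $T\lambda(\mathcal{I})$). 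So I would assume $x_1,\dots,x_T\in[0,1)$.

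Next, fixing an interval $\mathcal{I}\subseteq[0,1)$ with $\lambda(\mathcal{I})=L$, I would quote the Beurling--Selberg construction of trigonometric polynomials $S^{+},S^{-}$ of degree at most $G$ with
\begin{align*}
S^{-}(x)\le \1_{\mathcal{I}}(x)\le S^{+}(x)\qquad(x\in\R),
\end{align*}
$\widehat{S^{\pm}}(0)=L\pm\tfrac{1}{G+1}$, and $|\widehat{S^{\pm}}(k)|\le|\widehat{\1_{\mathcal{I}}}(k)|+\tfrac{1}{G+1}\le\tfrac{1}{\pi|k|}+\tfrac{1}{G+1}$ for $1\le|k|\le G$, where I use the trivial bound $|\widehat{\1_{\mathcal{I}}}(k)|\le\tfrac{1}{\pi|k|}$. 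Expanding $\sum_{n=1}^{T}S^{+}(x_n)$ through its finite Fourier series and isolating the constant term then gives
\begin{align*}
\sum_{n=1}^{T}\1_{\mathcal{I}}(x_n)-TL\ \le\ \sum_{n=1}^{T}S^{+}(x_n)-TL\ =\ \frac{T}{G+1}+\sum_{1\le|k|\le G}\widehat{S^{+}}(k)\sum_{n=1}^{T}e(kx_n),
\end{align*}
and since $\bigl|\sum_{n}e(-kx_n)\bigr|=\bigl|\sum_{n}e(kx_n)\bigr|$, $|\widehat{S^{+}}(-k)|=|\widehat{S^{+}}(k)|$, and $\tfrac{1}{G+1}\le\tfrac{1}{k}$ for $1\le k\le G$, the last sum is at most $\bigl(2+\tfrac{2}{\pi}\bigr)\sum_{k=1}^{G}\tfrac{1}{k}\bigl|\sum_{n=1}^{T}e(kx_n)\bigr|$. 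Running the same computation with $S^{-}$ in place of $S^{+}$ yields the matching lower bound for $\sum_{n}\1_{\mathcal{I}}(x_n)-TL$; taking absolute values and then the supremum over all admissible $\mathcal{I}$ completes the proof.

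The hard part is the one input I would quote rather than prove: the existence of the majorant and minorant $S^{\pm}$ with uniform control on \emph{all} their Fourier coefficients — these are the classical Selberg extremal polynomials for an interval. Everything after that is a single Fourier expansion together with the elementary estimates above, so in the write-up I would either cite the Selberg construction directly or, equivalently, cite \cite[Theorem 5.5]{Harman} and reduce the argument to the normalisation bookkeeping of the first paragraph.
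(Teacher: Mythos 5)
Your proposal is correct. The paper itself gives no proof of this lemma --- it simply states it as a modification of \cite[Theorem 5.5]{Harman} --- so your fallback of citing Harman reproduces the paper's approach exactly, while your primary route actually supplies the standard derivation the paper omits. The Selberg-majorant argument is the right one and your constant bookkeeping checks out: with $|\widehat{\1_{\mathcal{I}}}(k)|\le \frac{1}{\pi|k|}$ and the Vaaler/Selberg property $|\widehat{S^{\pm}}(k)-\widehat{\1_{\mathcal{I}}}(k)|\le\frac{1}{G+1}$ for $1\le|k|\le G$, the off-zero Fourier modes contribute at most $\sum_{1\le|k|\le G}\bigl(\frac{1}{\pi|k|}+\frac{1}{G+1}\bigr)\bigl|\sum_n e(kx_n)\bigr|\le\bigl(2+\frac{2}{\pi}\bigr)\sum_{k=1}^{G}\frac{1}{k}\bigl|\sum_n e(kx_n)\bigr|$, which is precisely the stated $C$. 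Your preliminary reductions (replacing $x_n$ by $\{x_n\}$, which changes neither the counting function nor the exponential sums, and the endpoint-nudging to handle open versus closed intervals at negligible cost) are exactly the normalisation needed to match the paper's definition of $\mathcal{L}$, which allows arbitrary interval types inside $[0,1)$. The only quoted input, the existence of the extremal trigonometric polynomials $S^{\pm}$ of degree at most $G$, is classical and appropriately cited, so nothing is missing.
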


\begin{lem}
\label{Necessary1.1}
For all $\gamma \in \R$ and $m \in \N \cup \{0\}$, there exists an effective constant $J_{b} > 0$ (depending only on $b$), such that if $w$ is an integer with $w \geq J_{b}b^{m/2}$ then $\|\gamma y\| \leq \frac{1}{2b^{m}}$ for some $y\in \mathfrak{D}_{b}^{*}(w)$, $y \neq 0$.
\end{lem}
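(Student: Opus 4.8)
The plan is to argue by contradiction and play the hypothesis off against the Erd\H os--Tur\'an inequality, fed by the exponential-sum bound of Lemma~\ref{main-1}.

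First I would dispose of $m=0$ separately: then $\tfrac1{2b^{m}}=\tfrac12\ge\|\gamma y\|$ for every $y$, and $1\in\mathfrak{D}_{b}(0)\subseteq\mathfrak{D}_{b}^{*}(w)$ is an admissible nonzero choice, so the claim holds with any $J_{b}$. Assume $m\ge 1$ and suppose, for contradiction, that $w$ is an integer with $w\ge J_{b}b^{m/2}$ (the constant $J_{b}$ to be pinned down at the end) for which $\|\gamma x\|>\tfrac1{2b^{m}}$ for all $x\in\mathfrak{D}_{b}^{*}(w)$, $x\neq 0$. The point is that this forces the finite sequence $(j\gamma)_{j\in\mathfrak{D}_{b}(w)}$, which has $T:=2^{w+1}$ terms, to have enormous discrepancy: testing the definition of $\mathcal{L}$ against the interval $\mathcal{I}=[0,\tfrac1{2b^{m}})$ and noting that $\mathfrak{D}_{b}(w)\setminus\{0\}\subseteq\mathfrak{D}_{b}^{*}(w)$, so that every nonzero $j$ has $\|j\gamma\|>\tfrac1{2b^{m}}$ and hence $\{j\gamma\}\notin\mathcal{I}$, while $j=0$ does lie in $\mathcal{I}$, one gets
\begin{align*}
\mathcal{L}\bigl((j\gamma)_{j\in\mathfrak{D}_{b}(w)}\bigr)\;\ge\;\Bigl|\,1-\frac{2^{w+1}}{2b^{m}}\,\Bigr|\;=\;\Bigl|\,1-\frac{2^{w}}{b^{m}}\,\Bigr|\;\ge\;\frac{9}{10}\cdot\frac{2^{w}}{b^{m}},
\end{align*}
the last step valid once $J_{b}$ is chosen large enough that $w\ge J_{b}b^{m/2}$ forces $2^{w}\ge 10\,b^{m}$ (possible since $\log_{2}(10b^{m})/b^{m/2}$ is bounded over $m\ge 1$).

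Next I would estimate the same discrepancy from above using Lemma~\ref{Discrepancy} with a cutoff $G$ to be chosen. Writing $\eta:=1-\tfrac{\pi}{4b^{2}}\in(0,1)$, separating the term $j=0$ from the exponential sum and invoking Lemma~\ref{main-1} with $r=w$ (whose hypothesis is exactly our contradiction assumption), together with $\sum_{k\le G}\tfrac1k\le 1+\ln G$ and $\eta^{-3\sqrt k/m}\le\eta^{-3\sqrt G/m}$, I obtain
\begin{align*}
\mathcal{L}\bigl((j\gamma)_{j\in\mathfrak{D}_{b}(w)}\bigr)\;\le\;\frac{2^{w+1}}{G+1}+C(1+\ln G)+C\,2^{w+3}(1+\ln G)\,\eta^{(w+1-3\sqrt{G})/m}.
\end{align*}
Then I would take $G=\lceil C_{1}b^{m}\rceil$ with $C_{1}$ a constant depending only on $b$ large enough that $\tfrac{2^{w+1}}{G+1}\le\tfrac1{10}\cdot\tfrac{2^{w}}{b^{m}}$; this also makes $\sqrt G\ll b^{m/2}$, so $w+1-3\sqrt G\ge(J_{b}-c)b^{m/2}$ for a suitable $c$ depending only on $b$, and after further enlarging $J_{b}$, $w+1-3\sqrt G\ge\tfrac{J_{b}}{2}b^{m/2}$. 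The remaining two terms are then controlled by requiring $C(1+\ln G)\le\tfrac1{10}\cdot\tfrac{2^{w}}{b^{m}}$ and $C\,2^{w+3}(1+\ln G)\,\eta^{J_{b}b^{m/2}/(2m)}\le\tfrac1{10}\cdot\tfrac{2^{w}}{b^{m}}$; after cancelling $2^{w}$ and taking logarithms, both of these say that a quantity linear in $m$ is at most a constant times $b^{m/2}/m$, which holds for every $m\ge 1$ once $J_{b}$ exceeds the finite supremum over $m\ge 1$ of the relevant ratios, because $b^{m/2}/m$ dominates every linear function of $m$ as $m\to\infty$ and is bounded below by a positive constant on $m\ge1$. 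Setting $J_{b}$ equal to the maximum of the finitely many explicit thresholds that have appeared, the upper bound becomes $\le\tfrac3{10}\cdot\tfrac{2^{w}}{b^{m}}$, contradicting the lower bound $\ge\tfrac9{10}\cdot\tfrac{2^{w}}{b^{m}}$; this contradiction proves the lemma, and $J_{b}$ is effective.

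The main obstacle is precisely this final balancing. Lemma~\ref{main-1} improves on the trivial bound $2^{r+1}$ only when $r$ is appreciably larger than $\sqrt k$, so in order to use it for every $k$ up to $G\asymp b^{m}$ one is forced to take $w\gg\sqrt G\asymp b^{m/2}$ --- which is exactly the shape of the hypothesis --- and then one must check that the genuine decay $\eta^{\,b^{m/2}/m}$ it supplies beats the polynomial-in-$b^{m}$ loss factor $(1+\ln G)\,b^{m}$ uniformly in $m$; the small-$m$ range is where a single $b$-dependent constant $J_{b}$ has to do the absorbing, and keeping every threshold explicit is what makes $J_{b}$ effective.
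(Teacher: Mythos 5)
Your proposal is correct and follows essentially the same route as the paper: assume for contradiction that $\|\gamma x\|>\tfrac1{2b^m}$ for all nonzero $x\in\mathfrak{D}_b^*(w)$, feed the exponential-sum bound of Lemma~\ref{main-1} into the Erd\H{o}s--Tur\'an inequality with a cutoff $G\asymp b^m$, and balance the resulting terms to force $w\ll b^{m/2}$. The only differences are cosmetic (you keep the $j=0$ term and phrase the contradiction as a discrepancy lower bound, and you retain the $\log G$ from $\sum 1/k$ where the paper bounds the sum more crudely); the quantitative conclusion and the effectivity of $J_b$ come out the same way.
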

\begin{proof}
Let all the elements of $\mathfrak{D}_{b}$ be arranged as $1=b_{1}<b_{2}<\ldots$. We can observe that when $r \in \N \cup \{0\}$ we have
\begin{align*}
\mathfrak{D}_{b}(r) = \{0\}\cup\{b_{1},\ldots,b_{2^{r+1}-1}\}.
\end{align*}
Let $\mathcal{I}_{m} = [0, \frac{1}{2b^{m}}]$, so that $\lambda(\mathcal{I}_{m}) = \frac{1}{2b^m}$, and assume that $r$ is a non-negative integer so that $\|\gamma x\| > \frac{1}{2b^m}$ for all $x \in \mathfrak{D}_{b}^{*}(r)$, $x \neq 0$.  With Lemma \ref{Discrepancy} (using $G = 8b^m$)  we have
\begin{align*}
\left|\left(\sum_{n=1, \ \{b_{n}\gamma\}\in \mathcal{I}_{m}}^{2^{r+1}-1} 1\right) - (2^{r+1}-1)\frac{1}{2b^m}\right| \leq \frac{2^{r+1}-1}{8b^m + 1}+C\sum_{k=1}^{8b^m}\frac{1}{k}\left|\sum_{n=1}^{2^{r+1}-1}e(kb_{n}\gamma)\right|.
\end{align*}
With the above inequality (and the assumption on $r$), it is deduced that

\begin{align*}
0=\sum_{n=1, \{\gamma b_{n}\} \in \mathcal{I}_{m}}^{2^{r+1}-1}1 &\geq \frac{2^{r+1}-1}{2b^m}-\left( \frac{2^{r+1}}{8b^m} + C \sum_{k=1}^{8b^m}\frac{1}{k}\left|\sum_{n=1}^{2^{r+1}-1}e(k\gamma b_{n})\right|\right)\\
&\geq \frac{2^{r-1}}{b^m}-\frac{2^{r-2}}{b^m}- C\sum_{k=1}^{8b^m}\left|\sum_{n=1}^{2^{r+1}-1}e(k\gamma b_{n})\right|\\
&= \frac{2^{r-2}}{b^m}-C\sum_{k=1}^{8b^m}\left|\sum_{n=1}^{2^{r+1}-1}e(k\gamma b_{n})\right|.
\end{align*}
Using Lemma \ref{main-1} it is observed that 
\begin{align}
\label{NecEqn}
\frac{2^{r-2}}{b^m} \leq C \sum_{k=1}^{8b^m}2^{r+3}\left(1 - \frac{\pi}{4b^2}\right)^{\frac{r-3\sqrt{k}+1}{m}}.
\end{align}
Inequality \eqref{NecEqn} implies that
\begin{align}
\label{NecEqn1}
\frac{1}{b^m}\leq C(256b^m)\left(1 - \frac{\pi}{4b^2}\right)^{\frac{r-3\sqrt{8b^m}+1}{m}}.
\end{align}
Put $U = \frac{4b^2}{4b^2-\pi}$, so that with \eqref{NecEqn1} we have the inequality
\begin{align*}
r \leq 3\sqrt{8b^m}+2m^2\log_{U}(256Cb)-1.
\end{align*}
Hence, $r \leq H_{b}b^{m/2}$ for an effective positive constant $H_{b}$ depending only on $b$.

To rephrase our above steps, we have shown that if $r$ is a non-negative integer satisfying $\|\gamma x\| > \frac{1}{2b^m}$ for all positive $x$ with $x \in \mathfrak{D}_{b}^{*}(r)$, then $r \leq H_{b}b^{m/2}$. Taking $J_{b} = 2H_{b}$ proves the Lemma.
\end{proof}
For a given natural $N \in \N$ we recall that 
\begin{align*}
t(b,N) = \lfloor \log_{b}(N(b-1)+1) - 1\rfloor,
\end{align*}
so that $t(b,N)$ is the largest integer $t$ satisfying $1 +b+\ldots+b^t \leq N$. We observe that $\mathfrak{D}_{b}^{*}(t(b,N)) \subseteq \{0,\ldots,N\}$. Put $m_{N} = ~\lfloor 2\log_{b}(\frac{t(b,N)}{ J_{b}})\rfloor$, so that 
\begin{align*}
t(b,N) = J_{b}b^{\left(2\log_{b}(\frac{t(b,N)}{ J_{b}})\right)/2} \geq J_{b}b^{m_{N}/2}.
\end{align*}
By Lemma \ref{Necessary1.1} there exists $y \in \mathfrak{D}_{b}^{*}(t(b,N))$ with $y \neq 0$ so that 
\begin{align*}
\|\gamma y\| &\leq \frac{1}{2b^{m_{N}}}\\
&\leq \frac{b}{2b^{2\log_{b}(\frac{t(b,N)}{ J_{b}})}}\\
&= \frac{bJ_{b}^2}{2 t(b,N)^2}\\
&=\frac{bJ_{b}^2}{2  \lfloor \log_{b}(N(b-1)+1) - 1\rfloor^2}.
\end{align*}
To put it differently, we can write
\begin{align*}
\min_{1 \leq n \leq N, \ n \in \mathfrak{D}_{b}^{*}} \leq \frac{bJ_{b}^2}{2  \lfloor \log_{b}(N(b-1)+1) - 1\rfloor^2},
\end{align*}
implying Lemma \ref{main0}. With Lemmas \ref{main0} and \ref{Db*} we have Theorem \ref{main1}.

\section{Proof of Theorem \ref{nHeilbronn}}
For $t \in \N$, let 
\begin{align*}
\mathcal{Z}_{b,t} := \left\{\sum_{d=1}^{t} b^{u_{d}} : u_{d} \in \N\cup\{0\}\right\},
\end{align*}
so that $\mathcal{Z}_{b,t}$ contains sums of $t$ integer powers of $b$. Let $1 = b_{1}<b_{2}<\ldots$ be all the elements of $\mathfrak{D}_{b}$ arranged in an increasing order. It is inspected that with $T \in \N$,
\begin{align}
\label{RegSubset}
\{b_{1},\ldots,b_{2^{T}-1} \}\subseteq \bigcup_{t=1}^{T}\mathcal{Z}_{b,t}.
\end{align}
We now construct preparatory tools to understand the behaviour of $n \mod b^{k}-1$ as $n$ ranges through $\mathcal{Z}_{b,t}$ when $k$ is a positive integer.
\begin{lem}
\label{PrepnHeil}
Let $k \in \N$, and suppose that $u_{0},\ldots u_{k-1}$ are non-negative integers satisfying $u_{0}+\ldots+u_{k-1} > 0$. There exists integers $v_{0},\ldots,v_{k-1}$ with $v_{j} \in \{0,\ldots,b-1\}$ so that
\begin{align*}
\sum_{j=0}^{k-1}u_{j}b^{j} \equiv \sum_{j=0}^{k-1}v_{j}b^{j} \bmod (b^{k}-1)
\end{align*}
and
\begin{align*}
 0<\sum_{j=0}^{k-1}v_{j}\leq \sum_{j=0}^{k-1}u_{j}.
\end{align*}
\end{lem}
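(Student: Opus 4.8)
The plan is to think of $\sum_{j=0}^{k-1} u_j b^j$ not merely as a number but as a formal multiset of powers: we have a collection of ``tokens'', namely $u_j$ copies of the exponent $j$ for each $j \in \{0,\dots,k-1\}$, and the total number of tokens is $\sum_j u_j$. Working modulo $b^k - 1$, we have $b^k \equiv 1$, so any token with exponent $j$ can be replaced by a token with exponent $j \bmod k$ without changing the residue; more importantly, $b$ copies of the exponent $j$ (contributing $b \cdot b^j = b^{j+1}$) can be replaced by a single token of exponent $j+1$ (and exponent $k-1$ wraps to $0$). This ``carry'' operation strictly decreases the total token count by $b-1 \geq 1$, while preserving the residue modulo $b^k-1$. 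Since $b \geq 2$, the token count is a non-negative integer that strictly decreases under each carry, so the process terminates.

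The key steps, in order: First, I would set up the reduction precisely — define the current state as a tuple $(w_0,\dots,w_{k-1})$ of non-negative integers (starting from $(u_0 \bmod \text{nothing}, \dots)$; actually starting from $(u_0,\dots,u_{k-1})$ after first folding any exponents $\geq k$ down via $b^k \equiv 1$, though here the hypothesis already gives exponents in $\{0,\dots,k-1\}$). Second, I would define the carry move: if some $w_j \geq b$, replace $w_j$ by $w_j - b$ and increment $w_{j+1 \bmod k}$ by $1$. Each move preserves $\sum_j w_j b^j \bmod (b^k-1)$ and strictly decreases $\sum_j w_j$. Third, I would argue termination: the quantity $\sum_j w_j$ is a non-negative integer strictly decreasing, so after finitely many moves we reach a state with all $w_j \leq b-1$; call this terminal state $(v_0,\dots,v_{k-1})$. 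Fourth, I would verify the two conclusions: the congruence $\sum u_j b^j \equiv \sum v_j b^j \bmod (b^k-1)$ holds since every move preserved the residue; the inequality $\sum v_j \leq \sum u_j$ holds since the token count never increased; and $\sum v_j > 0$ because $\sum u_j > 0$ and the residue $\sum v_j b^j$ modulo $b^k-1$ is not divisible by $b^k - 1$ unless $\sum v_j b^j = 0$ — here I need a small argument. Indeed, with all $v_j \in \{0,\dots,b-1\}$ we have $0 \leq \sum_j v_j b^j \leq (b-1)(1 + b + \dots + b^{k-1}) = b^k - 1$, with equality on the right iff all $v_j = b-1$; so $\sum v_j b^j \in \{0, b^k-1\}$ would force either all $v_j = 0$ or all $v_j = b-1$. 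If all $v_j = 0$ then the common residue is $0$, meaning $b^k - 1 \mid \sum u_j b^j$; I should rule this out, or rather, handle it: if it happens, replace the all-zero terminal state by the all-$(b-1)$ state, which has the same residue $0 \equiv b^k-1 \pmod{b^k-1}$ and has $\sum v_j = k(b-1) > 0$, and check $k(b-1) \leq \sum u_j$ — this last point needs care.

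The main obstacle I anticipate is precisely the lower bound $\sum v_j > 0$ together with keeping $\sum v_j \leq \sum u_j$ in the degenerate case where the carrying collapses everything to zero (equivalently, $b^k-1 \mid \sum u_j b^j$). In the generic case $\sum u_j b^j \not\equiv 0$, the terminal digit vector is automatically nonzero and we are done. In the degenerate case I can instead stop the carrying process one step early — just before the token count would drop to a point forcing the all-zero vector — or, cleaner: observe that if the all-zero vector is reached, then at the step just prior we had a state $(w_0,\dots,w_{k-1})$ with $\sum w_j \geq b$ (since one more carry emptied it) and residue $\equiv 0$; one can show directly that $b^k-1 \mid \sum u_j b^j$ with $\sum u_j > 0$ implies $\sum u_j \geq b$ (as $\sum u_j b^j \geq b^k - 1 > (b-1)(b^{k-1}+\dots+1)$ is impossible with $\sum u_j \leq b-1$... actually $\sum u_j b^j \geq b^k-1$ and each $u_j \leq \sum u_j$ gives $\sum u_j \cdot (b^k-1)/(b-1) \geq \sum u_j b^j \geq b^k - 1$, hence $\sum u_j \geq b-1$; a touch more care gives $\geq b$), so setting all $v_j = b-1$ works since $k(b-1) \leq \sum u_j$ must then hold — this inequality is the one delicate computation, and I would handle it by noting $\sum u_j b^j$ is a positive multiple of $b^k-1$, say $\ell(b^k-1)$ with $\ell \geq 1$, and $\sum u_j \geq \sum u_j b^j / b^{k-1} \geq \ell(b^k-1)/b^{k-1} \geq \ell(b-1) \cdot \frac{b^k-1}{b^k - b^{k-1}}$; since $\frac{b^k-1}{b^k-b^{k-1}} > 1$ this gives $\sum u_j > \ell(b-1) \geq b-1$, and since $\sum u_j$ is an integer, $\sum u_j \geq b > k(b-1)$ fails for large $k$ — so in fact the cleanest route is to keep carrying only while the token count stays above $k(b-1)$ is not the issue; rather I will present the carrying argument and, in the degenerate sub-case, simply choose the terminal representation to be the one among $\{$all zeros, all $(b-1)\}$ with positive digit sum, deferring the verification $k(b-1) \le \sum u_j$ to the direct estimate just sketched.
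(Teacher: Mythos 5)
Your core mechanism --- the carry move that replaces $b$ tokens of exponent $j$ by one token of exponent $j+1 \bmod k$, preserving the residue modulo $b^{k}-1$ and strictly decreasing the token count by $b-1$ --- is exactly the paper's argument (phrased there as a minimal-counterexample induction on $\sum_j u_j$, with the same cyclic carry $r_d = u_d - b$, $r_{(d+1)\bmod k} = u_{(d+1)\bmod k}+1$). The setup, the termination argument, and the upper bound $\sum_j v_j \le \sum_j u_j$ are all fine.

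The gap is in your treatment of $\sum_j v_j > 0$. You try to deduce positivity of the terminal digit sum from arithmetic of the residue class, which leads you into a ``degenerate case'' ($b^{k}-1 \mid \sum_j u_j b^{j}$) that you then propose to patch by swapping the all-zero terminal vector for the all-$(b-1)$ vector and verifying $k(b-1) \le \sum_j u_j$ ``by the direct estimate just sketched.'' That estimate does not deliver: from $\sum_j u_j b^{j} \ge b^{k}-1$ and $\sum_j u_j b^{j} \le \bigl(\sum_j u_j\bigr) b^{k-1}$ you only get $\sum_j u_j \ge b$, which is far short of $k(b-1)$ for large $k$, and your own text trails off acknowledging this. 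The entire detour is unnecessary, because the degenerate case cannot occur: a carry is performed only when some $w_j \ge b$, which forces the current token count to be at least $b$, so immediately after the carry the count is at least $b-(b-1)=1$. Hence every state reached by the process, including the terminal one, has strictly positive digit sum, and the all-zero vector is never produced --- regardless of whether $b^{k}-1$ divides $\sum_j u_j b^{j}$. (In the divisible case the process necessarily terminates at the all-$(b-1)$ vector, which incidentally yields $k(b-1)\le\sum_j u_j$ there, but you never need to argue this separately.) With that one-line observation replacing the final third of your proposal, the argument is complete and coincides with the paper's.
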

\begin{proof}
Suppose that $n$ is the smallest natural number such that there exist $u_{0},\ldots u_{k-1}$ with $\sum_{j=0}^{k-1}u_{j} = n$, for which the proposition is false. With this choice of $(u_{j})_{j=0}^{k-1}$ there exists some $d \in \{0,\ldots,k-1\}$ for which $u_{d} \geq b$ (and thus $n \geq b$). Let $r_{0},\ldots,r_{k-1}$ be non-negative integers so that $r_{d} = u_{d}-b$,
\begin{align*}
r_{d+1 - k \lfloor\frac{d+1}{k}\rfloor} = \left(u_{d+1 - k \lfloor\frac{d+1}{k}\rfloor}\right)+1
\end{align*}
and $r_{t} = u_{t}$ when $t \neq d$ and $ t \neq d+1 - k \lfloor\frac{d+1}{k}\rfloor$.

We observe that $(r_{j})_{j=0}^{k-1}$ is a non-negative integer sequence,
\begin{align*}
\sum_{j=0}^{k-1}u_{j}b^{j} \equiv \sum_{j=0}^{k-1}r_{j}b^{j} \bmod (b^{k}-1),
\end{align*}
and
\begin{align*}
 \sum_{j=0}^{k-1}r_{j} = \sum_{j=0}^{k-1}u_{j} -(b-1) = n-b+1.
\end{align*}
Thus, $n$ would not be the smallest number for which the proposition is false, yielding a contradiction.
\end{proof}
Let $k,t \in \N$, and put 
\begin{align*}
\mathfrak{A}(b,k,t) := \left\{\sum_{j=0}^{k-1}a_{j}b^{j}: a_{j}\in\{0,\ldots,b-1\}, 0<\sum_{j=0}^{k-1}a_{j} \leq t \right\}.
\end{align*}
\begin{lem}
\label{PrepnHeil2}
Let $t,k \in \N$, if $w \in \mathcal{Z}_{b,t}$ then there exists $c_{w} \in \mathfrak{A}(b,k,t)$ for which 
\begin{align*}
w \equiv c_{w} \bmod(b^{k}-1).
\end{align*}
\end{lem}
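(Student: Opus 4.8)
The plan is to unwind the definition of $\mathcal{Z}_{b,t}$ and apply Lemma \ref{PrepnHeil}. Suppose $w \in \mathcal{Z}_{b,t}$, so that $w = \sum_{d=1}^{t} b^{u_d}$ for some non-negative integers $u_1,\ldots,u_t$. First I would collapse the exponents modulo $k$: for each residue $j \in \{0,\ldots,k-1\}$, let $m_j$ count the indices $d \in \{1,\ldots,t\}$ with $u_d \equiv j \bmod k$. Since $b^{u_d} \equiv b^{\,u_d \bmod k} \bmod (b^k-1)$, we get
\begin{align*}
w \equiv \sum_{j=0}^{k-1} m_j b^{j} \bmod (b^k-1),
\end{align*}
where the $m_j$ are non-negative integers with $\sum_{j=0}^{k-1} m_j = t > 0$.

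Next I would feed the tuple $(m_0,\ldots,m_{k-1})$ into Lemma \ref{PrepnHeil}. That lemma supplies integers $v_0,\ldots,v_{k-1} \in \{0,\ldots,b-1\}$ with
\begin{align*}
\sum_{j=0}^{k-1} m_j b^{j} \equiv \sum_{j=0}^{k-1} v_j b^{j} \bmod (b^k-1)
\mand
0 < \sum_{j=0}^{k-1} v_j \leq \sum_{j=0}^{k-1} m_j = t.
\end{align*}
Setting $c_w := \sum_{j=0}^{k-1} v_j b^j$, the digit-sum bound $0 < \sum_j v_j \le t$ together with $v_j \in \{0,\ldots,b-1\}$ is exactly the membership condition $c_w \in \mathfrak{A}(b,k,t)$, and chaining the two congruences gives $w \equiv c_w \bmod (b^k-1)$, as required.

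The only point needing a little care is checking that the hypothesis of Lemma \ref{PrepnHeil} is met, namely $m_0 + \cdots + m_{k-1} > 0$; this holds because the $u_d$ range over $t \ge 1$ terms, so the counts sum to $t \ge 1$. No genuine obstacle arises here — the lemma is essentially a bookkeeping corollary of the previous one, the substance having already been absorbed into the extremal/minimal-counterexample argument proving Lemma \ref{PrepnHeil}.
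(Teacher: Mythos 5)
Your proposal is correct and follows essentially the same route as the paper: reduce the exponents modulo $k$ so that $w \equiv \sum_{j=0}^{k-1} m_j b^j \bmod (b^k-1)$ with $\sum_j m_j = t$, then apply Lemma~\ref{PrepnHeil} and observe that the resulting $\sum_j v_j b^j$ lies in $\mathfrak{A}(b,k,t)$. Your write-up is in fact slightly more explicit than the paper's, which compresses the residue-collapsing step into the phrase ``by modular arithmetic.''
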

\begin{proof}
From the definition of the set $\mathcal{Z}_{b,t}$ there exist non-negative integers $r_{1},\ldots,r_{t}$ for which $w = \sum_{d=1}^{t}b^{r_{d}}$. By modular arithmetic, we gather that
\begin{align*}
w \equiv \sum_{j=0}^{k-1}u_{j}b^{j} \bmod(b^{k}-1),
\end{align*}
where $u_{0},\ldots,u_{k-1}$ are non-negative integers that satisfy $u_{0}+\ldots+u_{k-1} = t$. By Lemma \ref{PrepnHeil} there exist integers $v_{0},\ldots,v_{k-1}$ with $v_{j} \in \{0,\ldots,b-1\}$, $v_{0}+\ldots+v_{k-1} \leq t$, and
\begin{align*}
\sum_{j=0}^{k-1}u_{j}b^{j} \equiv \sum_{j=0}^{k-1}v_{j}b^{j} \bmod(b^{k}-1).
\end{align*}
Now observe that $\sum_{j=0}^{k-1}v_{j}b^{j} \in \mathfrak{A}(b,k,t)$.
\end{proof}
\begin{lem}
\label{PrepnHeil3}
Let $k,t \in \N$ with $k > \frac{t}{(b-1)}$, then $\mathcal{Z}_{b,t}$ does not contain multiples of $b^{k}-1$.
\end{lem}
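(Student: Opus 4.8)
The plan is to show that any element $w \in \mathcal{Z}_{b,t}$ with $k > t/(b-1)$ has $w \not\equiv 0 \bmod (b^k-1)$. First I would invoke Lemma~\ref{PrepnHeil2}: given $w \in \mathcal{Z}_{b,t}$, there exists $c_w \in \mathfrak{A}(b,k,t)$ with $w \equiv c_w \bmod (b^k-1)$. By the definition of $\mathfrak{A}(b,k,t)$, we may write $c_w = \sum_{j=0}^{k-1} a_j b^j$ with each $a_j \in \{0,\ldots,b-1\}$ and $0 < \sum_{j=0}^{k-1} a_j \le t$. So it suffices to show $c_w$ is not a multiple of $b^k - 1$, i.e. that $c_w \not\equiv 0 \bmod (b^k-1)$.

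The key estimate is an upper bound on $c_w$. Since $\sum_{j=0}^{k-1} a_j \le t$ and $k > t/(b-1)$, we have $t < k(b-1)$, so $\sum_j a_j \le t < k(b-1)$. Now I would bound $c_w = \sum_{j=0}^{k-1} a_j b^j \le b^{k-1}\sum_{j=0}^{k-1} a_j \le b^{k-1} t$. Hmm — that crude bound is not good enough on its own, since $b^{k-1}t$ can exceed $b^k-1$. Instead, the cleaner route: since each $a_j \le b-1$ and not all $a_j$ are $b-1$ (as $\sum a_j \le t < k(b-1)$, so strictly fewer than the ``all digits maximal'' configuration), we get the strict inequality $c_w < \sum_{j=0}^{k-1}(b-1)b^j = b^k - 1$. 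Together with $c_w > 0$ (from $\sum_j a_j > 0$, which forces $c_w \ge 1$), this gives $0 < c_w < b^k - 1$, hence $c_w$ is not a multiple of $b^k-1$, and therefore neither is $w$.

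The one point requiring a touch of care — and the main (minor) obstacle — is justifying the strict inequality $c_w < b^k - 1$ rather than just $\le$. If $\sum_{j=0}^{k-1} a_j \le t < k(b-1)$, then the digit vector $(a_0,\ldots,a_{k-1})$ cannot equal $(b-1,b-1,\ldots,b-1)$, since that vector has digit sum $k(b-1) > t$. As these are the unique digits (base $b$, length $k$) maximizing $\sum_j a_j b^j$, any other choice gives $c_w \le (b-1)(b^{k-1}+\cdots+b+1) - 1 = b^k - 2 < b^k - 1$. This closes the argument: combining with $1 \le c_w$ we conclude $c_w \not\equiv 0 \bmod (b^k-1)$, and by Lemma~\ref{PrepnHeil2}, $w \not\equiv 0 \bmod (b^k - 1)$ for every $w \in \mathcal{Z}_{b,t}$; that is, $\mathcal{Z}_{b,t}$ contains no multiple of $b^k - 1$.
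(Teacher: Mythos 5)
Your proof is correct and follows essentially the same route as the paper: reduce to $\mathfrak{A}(b,k,t)$ via Lemma~\ref{PrepnHeil2}, then observe that the digit-sum bound $\sum_j a_j \le t < k(b-1)$ forces every element of $\mathfrak{A}(b,k,t)$ to lie strictly between $0$ and $b^k-1$. The paper computes the maximum $h(b,k,t)$ explicitly via a case split, while you argue more directly that the all-$(b-1)$ digit vector is excluded; both yield the same conclusion.
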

\begin{proof}
It is recorded that 
\begin{align*}
\mathfrak{A}(b,k,t) \subseteq \{1,\ldots,h(b,k,t)\},
\end{align*}
where $h(b,k,t) \leq \sum_{j=0}^{k-1}(b-1)b^{j} = b^{k}-1$. We will now more carefully estimate $h(b,k,t)$. There are two cases to consider:
\begin{itemize}
\item If $t < b-1$, then $b \geq 3$ (as $t \in \N$), and $h(b,k,t) = tb^{k-1} < b^{k}-1$.
\item If $t \geq b-1$, then 
\begin{align*}
h(b,k,t) = \left(\sum_{d=0}^{\lfloor\frac{t}{(b-1)}\rfloor-1}(b-1)b^{k-1-d}\right) + \left(t-(b-1)\lfloor\frac{t}{b-1}\rfloor\right)b^{k-1-\lfloor\frac{t}{(b-1)}\rfloor}.
\end{align*}
Upon inspection, we note that
\begin{align*}
h(b,k,t) < \sum_{d=0}^{k-1}(b-1)b^{d} = b^{k}-1.
\end{align*}
\end{itemize}
Hence, we gather that (with the constraints on $t$ and $k$)
\begin{align*}
\mathfrak{A}(b,k,t) \subseteq \{1,\ldots,b^{k}-2\}.
\end{align*}
Since $\mathfrak{A}(b,k,t)$ does not contain multiples of $b^{k}-1$, by Lemma \ref{PrepnHeil2}, $\mathcal{Z}_{b,t}$ does not contain multiples of $b^{k}-1$. 
\end{proof}
Let $T = \lceil \log_{2}(N+1)\rceil$ so that
\begin{align*}
\{b_{1},\ldots,b_{N}\} \subseteq \{b_{1},\ldots,b_{2^{T}-1}\}
\end{align*}
By the subset relation \eqref{RegSubset} and Lemma \ref{PrepnHeil3}, $\{b_{1},\ldots,b_{2^{T}-1}\}$ does not contain multiples of $b^{k} - 1$ when $k = \lceil\frac{T}{b-1}\rceil + 1$. In particular it is verified that with $\gamma_{N} = \frac{1}{b^{k}-1}$,  $\|\gamma_{N} w\| \geq \frac{1}{b^{k}-1}$ for all $w \in \{b_{1},\ldots,b_{N}\}$. Observe that
\begin{align*}
\frac{1}{b^{k}-1} &\geq \frac{1}{b^{\frac{T}{b-1}+2}-1}\geq \frac{1}{b^{\frac{T}{b-1}+2}}\geq \frac{1}{b^{\frac{\log_{2}(N+1)}{b-1}+3}}\geq \frac{1}{b^{\frac{\log_{2}(N)}{b-1}+4}}= \frac{1}{b^4}N^{-\frac{\log_{2}b}{b-1}}.
\end{align*}
\section{Conjectures}
It is possible that the decay function in Theorem \ref{main1} is not optimal. Inspired by the form of the decay function presented in \eqref{Squareref}, employed to establish that the set of positive squares is a $\mathcal{H}-$set, we pose the following question.
\begin{conj}
There exists a constant $c > 0$ so that for all $\gamma \in \R$ we have
\begin{align*}
\min_{1 \leq n \leq N, \ n \in \mathfrak{D}_{b}} \|\gamma n\| \ll N^{-c}
\end{align*}
where the implied constant is independent of $\gamma$.
\end{conj}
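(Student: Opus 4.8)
I would attack this conjecture by pushing the exponential-sum and Erd\H{o}s--Tur\'{a}n machinery behind Theorem~\ref{main1} (Lemmas~\ref{Prodbound}, \ref{Addcomb2}, \ref{main-1}, \ref{Discrepancy} and \ref{Necessary1.1}) past the logarithmic regime. Fix $b \geq 3$ and $r = t(b,N)$, so $r \asymp \log_b N$ and $\mathfrak{D}_b^{*}(r) \subseteq \{0,\ldots,N\}$; by Lemma~\ref{Db*} it is enough to exhibit a nonzero $y \in \mathfrak{D}_b^{*}(r)$ with $\|\gamma y\| \leq \tfrac{1}{2b^{m}}$ whenever $m \leq c\log_b N$, for some fixed $c = c(b) > 0$. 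Suppose this fails; then running Lemma~\ref{Discrepancy} on $(\{b_n\gamma\})_{n=1}^{2^{r+1}-1}$ against $[0,\tfrac{1}{2b^{m}}]$ with $G \asymp b^{m}$ frequencies forces
\[
\left|\sum_{j \in \mathfrak{D}_b(r),\, j\neq 0} e(kj\gamma)\right| \ll 2^{r+1}\, b^{-Am}
\]
uniformly for $1 \leq k \leq G$ and a suitably large $A = A(b)$. Lemma~\ref{main-1} supplies only $2^{r+3}\bigl(1-\tfrac{\pi}{4b^{2}}\bigr)^{(r-3\sqrt{k}+1)/m}$, which is of the required strength precisely when $r \gg \sqrt{k} + m^{2}$; for the dangerous frequencies $k \asymp b^{m} \asymp N^{c}$ this would demand $\log_b N \gg N^{c/2}$, so the argument collapses. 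Hence a power saving requires either a genuinely better exponential-sum bound at high frequencies, or a way to cut down the range of $k$ that truly has to be controlled.

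Two routes seem worth pursuing. The first is to sharpen Lemma~\ref{Addcomb2}: it allows as many as $3\sqrt{k}$ of the dilates $b^{0}\gamma,\ldots,b^{r}\gamma$ to satisfy $\|kb^{d}\gamma\| \leq \beta$, the $\sqrt{k}$ being forced by the zero-sum threshold of $\Z/k\Z$ (Olson's theorem, Lemma~\ref{Addcomb}), which is sharp for \emph{unstructured} residue sets. But the residues occurring in that proof are drawn from an orbit of the $\times b$ map, so by Lemma~\ref{IntegerDistanceLemma} the ``small'' dilates must be spread out and their escape times are constrained; extracting from this self-similar structure a bound of the shape $k^{o(1)}$, or even $m^{O(1)}$, in place of $\sqrt{k}$ would upgrade Lemma~\ref{main-1} enough to complete the scheme. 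The second route is a major/minor-arc split over $k$: when $k\gamma$ is not abnormally close to a rational of denominator $\lesssim b^{\varepsilon m}$, Lemma~\ref{Prodbound} already yields far more cancellation than is needed, so only the sparse set of resonant $k$ — which, by the mechanism behind Theorem~\ref{nHeilbronn}, clusters near the ``obstruction denominators'' $b^{\ell}-1$ — would require delicate treatment.

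The main obstacle is the first route. The $\sqrt{k}$ loss is a genuine feature of zero-sum combinatorics, and it is far from clear how much the $\times b$-orbit structure of $(b^{d}\gamma)_{d}$ actually recovers; this is entangled with well-known difficulties concerning the joint distribution of the $\|\gamma b^{d}\|$ and with the renormalization techniques used for exponential sums over digit-restricted (Cantor-type) sets. Moreover Theorem~\ref{nHeilbronn} shows the conjecture, if true, is essentially tight at $\gamma$ lying near some $1/(b^{\ell}-1)$, so any successful argument must quantitatively measure how far $\gamma$ is from all such points and convert that distance into extra cancellation in the sums above — a mechanism the present product estimate does not see. I therefore expect a complete proof to need an input genuinely beyond the methods developed here, very much in the spirit of the still-open improvements of Heilbronn's bound~\eqref{Squareref} for squares.
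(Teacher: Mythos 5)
The statement you were asked about is not a theorem of the paper but one of its stated conjectures: the paper offers no proof, and indeed the whole point of the surrounding section is that such a power saving is open. Your submission, accordingly, is not a proof --- you say as much in your final paragraph --- so there is a genuine gap in the most basic sense: the key ingredient is missing and you do not claim to supply it. What you do supply is an accurate diagnosis of where the paper's machinery stalls. Your accounting of Lemma~\ref{main-1} is correct: the bound $2^{r+3}\bigl(1-\tfrac{\pi}{4b^{2}}\bigr)^{(r-3\sqrt{k}+1)/m}$ is nontrivial only when $r \gtrsim \sqrt{k}$, and since the Erd\H{o}s--Tur\'an step needs $k$ up to $G \asymp b^{m}$, one is forced into $r \gtrsim b^{m/2}$ --- exactly the inequality $r \le H_b b^{m/2}$ of Lemma~\ref{Necessary1.1} --- which caps the saving at $\|\gamma y\| \lesssim b^{-m} \lesssim r^{-2} \asymp (\log N)^{-2}$. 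So you have correctly located the bottleneck (the $\sqrt{k}$ loss inherited from Olson's zero-sum theorem via Lemma~\ref{Addcomb2}) and correctly observed that a power saving $b^{m}\asymp N^{c}$ would require $\log N \gg N^{c/2}$, which is absurd.

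Both of your proposed escape routes are plausible research directions but neither is carried out, and each faces a concrete difficulty you should be aware of. For the first: it is not enough to exploit that the residues $s_1,\ldots,s_g$ in Lemma~\ref{Addcomb2} lie on a $\times b$-orbit, because the contradiction in that lemma is extracted from sums $b^{d_{u_1}}+\cdots+b^{d_{u_h}}$ landing in $\mathfrak{D}_b^{*}(r)$, and the structure of $\mathfrak{D}_b^{*}$ (digits $0,1$ plus the differences $b^d-b^c$) is precisely what limits which residue combinations are usable; enlarging the admissible combinations enlarges the approximating set beyond $\mathfrak{D}_b$, which Lemma~\ref{Db*} only repairs for the specific set $\{b^d-b^c\}$. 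For the second: a major/minor-arc split over $k$ still has to handle the resonant $k$, and Theorem~\ref{nHeilbronn} shows these resonances are not an artifact --- for $\gamma = 1/(b^{\ell}-1)$ the conjecture is genuinely near-sharp, so any argument must see the distance from $\gamma$ to all such points, which the product bound of Lemma~\ref{Prodbound} does not. In short: your obstruction analysis is sound and consistent with the paper's own framing of this as open, but no part of your text constitutes progress toward a proof of the conjecture itself.
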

With Theorem \ref{nHeilbronn} and a density argument we know that $c \leq \frac{1}{b-1}$. Next, once again turning our attention to the literature on approximating real numbers with rationals having square denominators, we present a further inquiry.
\begin{conj}
The set $\mathfrak{D}_{b,2}$, comprising the squares of elements from $\mathfrak{D}_{b}$, formally defined as
\begin{align*}
\mathfrak{D}_{b,2} := \{s^2 : s \in \mathfrak{D}_{b}\},
\end{align*}
is a $\mathcal{H}-$set.
\end{conj}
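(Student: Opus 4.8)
The plan is to produce \emph{some} decay function $f$ with $f(N)\to 0$ such that $\min_{1\le n\le N,\ n\in\mathfrak{D}_{b,2}}\|\gamma n\|\le f(N)$ for every $\gamma$; since the $\mathcal{H}$-set property requires no optimality, I will aim only for a modest decay and adapt the machinery of Sections~4--5 rather than the elementary argument. The elementary differencing behind~\eqref{eq:ElemBound} is unavailable here: pigeonholing the points $\{s^2\gamma\}$ yields $s,s'\in\mathfrak{D}_b$ with $\|(s^2-s'^2)\gamma\|$ small, but $s^2-s'^2=(s-s')(s+s')$ is generally not the square of an element of $\mathfrak{D}_b$. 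Exactly as for Heilbronn's estimate~\eqref{Squareref}, I would therefore route everything through the quadratic exponential sum
$$W(\gamma,k,r):=\sum_{s\in\mathfrak{D}_b(r)}e(k\gamma s^2),$$
feeding it into the Erd\H{o}s--Tur\'an inequality (Lemma~\ref{Discrepancy}) applied to the sequence $(s^2\gamma)_{s\in\mathfrak{D}_b(r)}$.

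Following the scheme of Lemma~\ref{Necessary1.1}, I would assume $\gamma$ is ``unfavourable'', meaning $\|\gamma s^2\|>\tfrac{1}{2b^m}$ for all nonzero $s\in\mathfrak{D}_b(r)$, apply Lemma~\ref{Discrepancy} to the $2^{r+1}$ points $s^2\gamma$ with interval $\mathcal{I}_m=[0,\tfrac{1}{2b^m}]$ and cut-off $G\asymp b^m$, and extract (because the count of points in $\mathcal{I}_m$ is forced to be $0$) an inequality of the shape
$$\frac{2^{r}}{b^m}\ \ll\ \sum_{k=1}^{G}\frac{1}{k}\,\bigl|W(\gamma,k,r)\bigr|.$$
If one can establish a bound $|W(\gamma,k,r)|\le 2^{r+1}(1-c_b)^{E(r,k,m)}$ with an exponent $E(r,k,m)\to\infty$ uniformly for $1\le k\le G$ once $r$ is a suitable multiple of $b^{m/2}$, then the right-hand side becomes negligible against the left, forcing a contradiction. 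This would show $\gamma$ cannot be unfavourable for large $N$, yielding a decay function tending to zero, which is all that the $\mathcal{H}$-set property demands.

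The heart of the matter is bounding $W$. I would apply one Weyl (van der Corput) differencing in the digit vector. Writing $s,s'\in\mathfrak{D}_b(r)$, setting $h=s-s'$, and summing over $t:=s'$, one has $s^2-s'^2=h^2+2ht$, so
$$\bigl|W(\gamma,k,r)\bigr|^2=\sum_{h}e(k\gamma h^2)\sum_{t}e(2k\gamma h t),$$
where $h$ ranges over the $3^{r+1}$ integers with balanced base-$b$ digits in $\{-1,0,1\}$, and the inner sum runs over those $t$ whose digits are forced at the nonzero positions of $h$ and free in $\{0,1\}$ at its zero positions (no carries occur, exactly because the admissible digit sums stay in $\{0,1\}$). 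Crucially, this inner sum then factorises over digits precisely as in Lemma~\ref{Prodbound}:
$$\left|\sum_{t}e(2k\gamma h t)\right|=\prod_{d:\,\eta_d(h)=0}2\left|\cos\!\left(2\pi k h b^{d}\gamma\right)\right|\le\prod_{d:\,\eta_d(h)=0}\left(1-\pi\|2k h b^{d}\gamma\|^2\right),$$
using the cosine bound underlying Lemma~\ref{Prodbound}. This is the one place the restricted-digit structure is genuinely exploited, and it is the direct quadratic analogue of that lemma.

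The main obstacle is the outer sum over the $3^{r+1}$ difference patterns $h$. Bounding each inner factor trivially by $2^{|Z(h)|}$ merely recovers $|W|\le 2^{r+1}$, so I must show that for all but a negligible proportion of $h$ the fractional parts $\|2khb^{d}\gamma\|$ stay bounded away from $0$ for a number of positions $d$ growing with $r$ --- the statement one would hope to extract from an analogue of Lemma~\ref{Addcomb2}. The difficulty is that the relevant shift $2khb^{d}\gamma$ involves the full integer $h$, so its value couples \emph{all} digits of $h$, destroying the clean per-digit factorisation that made Lemma~\ref{main-1} work. I expect that closing this gap will require either a second differencing step applied to $\sum_h e(k\gamma h^2)(\cdots)$, an inductive self-improving estimate in $r$, or a retreat to a structured subfamily of $\mathfrak{D}_{b,2}$ --- for instance squares of sparse sums $\sum_i b^{a_i}$ with widely spaced exponents, whose base-$b$ expansion $s^2=\sum_i b^{2a_i}+2\sum_{i<j}b^{a_i+a_j}$ has digits in $\{0,1,2\}$ at distinct positions when $b\ge 3$, where the phase again partially factorises. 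This essentially quadratic Weyl estimate over digit-restricted integers is where the difficulty of the conjecture concentrates, mirroring the gap between the proven and conjectured exponents in~\eqref{Squareref}.
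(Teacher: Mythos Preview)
The statement you are attempting to prove is labelled a \emph{conjecture} in the paper (Section~7), and the paper offers no proof of it whatsoever. There is therefore nothing to compare your proposal against: the authors leave the question entirely open, motivated by analogy with Heilbronn's theorem on squares.

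Your proposal is not a proof but an honest research outline, and you correctly identify where it breaks down. After one Weyl differencing the inner sum over $t$ does factorise over digits in the manner of Lemma~\ref{Prodbound}, but the outer sum over the $3^{r+1}$ balanced-digit differences $h$ is the entire problem: the phase $2k h b^{d}\gamma$ couples all digits of $h$, so you have no analogue of Lemma~\ref{Addcomb2} available, and no mechanism for showing that a positive proportion of positions $d$ contribute nontrivially for most $h$. Without such a mechanism the bound on $|W(\gamma,k,r)|$ does not beat the trivial $2^{r+1}$, the Erd\H{o}s--Tur\'an step yields nothing, and the argument does not close. The fallback ideas you list (a second differencing, an inductive self-improvement, or restricting to sparse exponents) are reasonable directions to explore, but none is carried out, and each faces its own version of the same coupling obstacle. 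As it stands, the proposal is a plausible strategy sketch for an open problem, not a proof.
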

\section{Acknowledgements}
The author would like to thank Igor Shparlinski for suggesting the paper \cite{Olsen}, which eventually inspired the construction of $\mathfrak{D}_{b}^{*}$. During the preparation of this work, S.I. was supported by an Australian Government Research Training Program (RTP) Scholarship.
 
\newpage
\appendix
\section{An inequality with cosine}
\begin{lem}
\label{Cosbound}
If $x \in \R$, then
\begin{equation*}
|\cos(\pi x)| \leq 1- \pi \|x\|^2.
\end{equation*}
\end{lem}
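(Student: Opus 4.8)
\textbf{Proof plan for Lemma~\ref{Cosbound}.}
The plan is to reduce the inequality $|\cos(\pi x)| \leq 1 - \pi\|x\|^2$ to a single-variable estimate on the interval $[0,1/2]$, and then prove that estimate by elementary calculus. First I would observe that both sides are even and $1$-periodic in $x$: the left side is $|\cos(\pi x)|$, which satisfies $|\cos(\pi(x+1))| = |\cos(\pi x)|$ and $|\cos(-\pi x)| = |\cos(\pi x)|$, while $\|x\|$ is also even and $1$-periodic. Hence it suffices to prove the inequality for $x \in [0,1/2]$, where $\|x\| = x$, so the claim becomes
\begin{align*}
\cos(\pi x) \leq 1 - \pi x^2, \qquad 0 \leq x \leq 1/2.
\end{align*}
(Note $\cos(\pi x) \geq 0$ on this range, so the absolute value may be dropped.)

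Next I would set $g(x) := 1 - \pi x^2 - \cos(\pi x)$ and show $g(x) \geq 0$ on $[0,1/2]$. We have $g(0) = 0$, and $g'(x) = -2\pi x + \pi \sin(\pi x) = \pi(\sin(\pi x) - 2x)$. The sign of $g'$ is governed by $h(x) := \sin(\pi x) - 2x$: since $\sin(\pi x)$ is concave on $[0,1/2]$ and $h(0) = 0$, $h(1/2) = 1 - 1 = 0$, concavity of $h$ forces $h(x) \geq 0$ throughout $[0,1/2]$, hence $g'(x) \geq 0$ there. Combined with $g(0) = 0$ this gives $g(x) \geq 0$ on $[0,1/2]$, which is exactly the desired inequality. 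An alternative to the concavity argument is to note that $\sin(\pi x) \geq 2x$ on $[0,1/2]$ is itself the statement that the chord of the concave function $\sin(\pi x)$ between $x=0$ and $x=1/2$ lies below the graph; either phrasing works.

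There is essentially no serious obstacle here; the only point requiring a little care is making sure the periodicity/evenness reduction is stated cleanly so that the bound $\|x\| = x$ is legitimate on the reduced domain, and confirming $\cos(\pi x) \geq 0$ there so the absolute value can be removed. The calculus step is routine once the reduction is in place. (One could even avoid calculus entirely by using the identity $1 - \cos(\pi x) = 2\sin^2(\pi x/2)$ and the bound $\sin(\pi x/2) \geq \sqrt{\pi/2}\,x$ on $[0,1/2]$ coming from concavity of $\sin$, giving $1 - \cos(\pi x) \geq \pi x^2$ directly; this is perhaps the cleanest route and is the one I would ultimately write up.)
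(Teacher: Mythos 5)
Your proposal is correct and follows essentially the same route as the paper: reduce by periodicity and evenness to $x\in[0,1/2]$, where the claim becomes $\cos(\pi x)\le 1-\pi x^2$, and derive this from Jordan's inequality $\sin(\pi x)\ge 2x$ on that interval. The only difference is cosmetic — the paper obtains $\sin y \ge \tfrac{2}{\pi}y$ from the monotonicity of $\sin y/y$ and then integrates, whereas you differentiate $1-\pi x^2-\cos(\pi x)$ and justify the sign of the derivative by concavity of $\sin$; both steps are valid.
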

\begin{proof}
For $x$ in suitable real intervals, put
\begin{align*}
f_{1}(x) = x- \tan(x)
\end{align*}
and
\begin{align*}
f_{2}(x) = \frac{\sin(x)}{x}.
\end{align*}
Note that $f_{1}(0) = 0$ and
\begin{align*}
f_{1}'(x) = 1 - \frac{1}{\cos^2(x)}
\end{align*}
 so that $f_{1}(x) \leq 0$ on the interval $[0,\frac{\pi}{2})$. We compute
\begin{align*}
f_{2}'(x) &= \frac{\cos(x)}{x^2}(x-\tan(x))\\
&= \frac{\cos(x)}{x^2}f_{1}(x),
\end{align*}
so that $f_{2}'(x) \leq 0$ on the interval $(0,\frac{\pi}{2})$. Hence on the interval $(0, \frac{\pi}{2})$ the inequality
\begin{align*}
f_{2}(x) = \frac{\sin(x)}{x} \geq \frac{\sin(\frac{\pi}{2})}{\frac{\pi}{2}} = \frac{2}{\pi}
\end{align*}
holds. By continuity, it must hold true that for all $y \in [0, \frac{\pi}{2}]$ we have
\begin{align*}
-\sin(y) \leq -\frac{2}{\pi}y.
\end{align*}
As $\frac{d}{dy}(\cos(y)) = - \sin(y)$, whenever $x \in [0,\frac{\pi}{2}]$ we must have
\begin{align*}
\cos(x) &= \cos(0) + \int_{0}^{x}-\sin(y) \ dy\\
&\leq 1 +\int_{0}^{x}-\frac{2}{\pi}y \ dy\\
&=1-\frac{x^2}{\pi}.
\end{align*}
Hence, whenever $x \in [0,\frac{1}{2}]$, we have the compound inequality
\begin{align*}
0\leq \cos(\pi x) \leq 1- \pi x^2.
\end{align*}
Now suppose that $x$ is some fixed real number (not necessarily in $[0,\frac{1}{2}]$), there would be an integer $n \in \Z$ so that $\|x\| = |x+n|$. Observe that
\begin{align*}
1 - \pi \|x\|^2 &\geq \cos(\pi \|x\|)\\
&=|\cos(\pi \|x\|)|\\
&=|\cos(\pi|x+n|)|\\
&=|\cos(\pi(x+n))|\\
&=|\cos(\pi x)\cos(\pi n)|\\
&=|\cos(\pi x)|.
\end{align*}
\end{proof}

\end{document}